\documentclass[11pt]{amsart}

\usepackage{amsxtra,amssymb,amsmath,amscd,url,listings,stmaryrd}
\usepackage[alphabetic,initials]{amsrefs}
\usepackage{scalerel,stackengine}
\stackMath
\usepackage[utf8]{inputenc}
\usepackage{eucal}
\usepackage{fullpage}
\usepackage{scrtime}
\usepackage[colorlinks]{hyperref}
\usepackage{xcolor}
\usepackage{datetime}
\usepackage{graphicx}

\shortdate

\renewcommand{\leq}{\leqslant}
\renewcommand{\geq}{\geqslant}
\newcommand\widecheck[1]{%
\savestack{\tmpbox}{\stretchto{%
  \scaleto{%
    \scalerel*[\widthof{\ensuremath{#1}}]{\kern-.4pt\bigwedge\kern-.4pt}%
    {\rule[-\textheight/2]{1ex}{\textheight}}%WIDTH-LIMITED BIG WEDGE
  }{\textheight}% 
}{0.5ex}}%
\stackon[2pt]{#1}{\scalebox{-1}{\tmpbox}}%
}

\numberwithin{equation}{section}

\def\stacksum#1#2{{\stackrel{{\scriptstyle #1}}
{{\scriptstyle #2}}}}

\def\map#1#2#3#4{\begin{matrix}#1&\mapsto &#2
\\#3 &\mapsto &#4
\end{matrix}}

\newcommand{\FT}{\mathrm{FT}}

\newcommand{\Cc}{\mathbf{C}}

\newcommand{\Aa}{\mathbf{A}}
\newcommand{\Gm}{\mathbb{G}_{m}}

\newcommand{\Zz}{\mathbf{Z}}

\newcommand{\Rr}{\mathbf{R}}

\newcommand{\Qq}{\mathbf{Q}}

\newcommand{\Fq}{{\mathbf{F}_q}}

\newcommand{\Fqt}{{\mathbf{F}^\times_q}}
\newcommand{\Fqqt}{{\mathbf{F}^\times_{q_0}}}

\newcommand{\Ff}{\mathbf{F}}

\newcommand{\mcW}{\mathcal{W}}

\newcommand{\HYPK}{\mathcal{K}\ell}
\newcommand{\KL}{\mathcal{K}\ell}

\newcommand{\mods}[1]{\,(\mathrm{mod}\,{#1})}

\newcommand{\what}{\widehat}

\newcommand{\ra}{\rightarrow}

\DeclareMathOperator{\Kl}{\mathrm{Kl}}

\DeclareMathOperator{\swan}{Swan}

\newcommand{\eps}{\varepsilon}
\renewcommand{\rho}{\varrho}

\DeclareMathOperator{\GL}{GL}

\DeclareMathSymbol{\gena}{\mathord}{letters}{"3C}
\DeclareMathSymbol{\genb}{\mathord}{letters}{"3E}

\theoremstyle{plain}
\newtheorem{theorem}{Theorem}[section]
\newtheorem*{theorem*}{Theorem}
\newtheorem{lemma}[theorem]{Lemma}

\newtheorem{corollary}[theorem]{Corollary}

\newtheorem{proposition}[theorem]{Proposition}

\newtheorem{definition}[theorem]{Definition}

\newtheorem*{notation}{Notation}

\theoremstyle{remark}

\theoremstyle{definition}

\newtheorem{remark}[theorem]{Remark}
\newtheorem*{remark*}{Remark}

\newcommand{\mcL}{\mathcal{L}}

\newcommand{\mcC}{\mathcal{C}}

\newcommand{\mcF}{\mathcal{F}}
\newcommand{\mcZ}{\mathcal{Z}}

\newcommand{\mcK}{\mathcal{K}}

\newcommand{\vphi}{\varphi}

\renewcommand{\geq}{\geqslant}
\renewcommand{\leq}{\leqslant}
\renewcommand{\Re}{\mathfrak{Re}\,}

\newcommand{\ov}[1]{\overline{#1}}

\newcommand\sumsum{\mathop{\sum\sum}\limits}

\newcommand{\sumstar}{\sideset{}{^\star}\sum}

\begin{document}
\title{On algebraic twists with composite moduli, II}

\author{Yongxiao Lin}
\address{Data Science Institute, Shandong University, Jinan 250100, China; State Key Laboratory of Cryptography and Digital Economy Security, Shandong University, Jinan 250100, China}
\email{yongxiao.lin@sdu.edu.cn}

\author{Philippe Michel}
\address{EPFL/MATH/TAN, Station 8, CH-1015 Lausanne, Switzerland }
\email{philippe.michel@epfl.ch}

%\date{\today,\ \thistime} 

\begin{abstract} 
  We study bounds for correlation sums of automorphic coefficients on $\GL_{3,\Qq}$ with trace functions of composite moduli. This is a sequel of  \cite{KLMS} and \cite{Lin-Michel}.
\end{abstract}

\thanks{Y. \ L. was partially supported by the National Key R\&D Program of China (No. 2021YFA1000700). Ph.\ M. was partially supported by the SNF grant 197045 and the SNF-ANR ``ETIENE'' grant 10003145.  \today\ \currenttime}

\maketitle
%\setcounter{tocdepth}{1}
%\tableofcontents

\section{Introduction}

In this paper we pursue our investigations on algebraic twists of automorphic $L$-function: given some automorphic $L$-function of degree $d\geq 1$
$$L(\pi,s)=\sum_{n\geq 1}\frac{\lambda_\pi(n)}{n^s}=\prod_p L(\pi_p,s)$$
(normalized so that $\Re s=1/2$ is the critical line), an integer $q$ and an ``algebraic exponential sum''
$$K:\Zz/q\Zz\to \Cc$$
modulo $q$ and $V$ a smooth compactly supported function, our aim is to obtain non-trivial bounds for the correlation sum 
\begin{equation}\label{nontrivialgeneric}
\mcC_V(X;\pi,K):=\sum_{n\geq 1}\lambda_\pi(n)K(n)V\left(\frac{n}X\right)\stackrel{?}{\ll}_{K,\pi,V}X^{1-\delta}
\end{equation}
for some fixed $\delta>0$ as $q,X\ra\infty$ in suitable ranges. Two  ranges are of peculiar interest  as they appear as natural barriers in numerous classical problems in analytic number theory:
\begin{itemize}
    \item[--] The {\em convexity range}
    $$X_c=q^{d/2}.$$
    Indeed for $K=\chi$ a primitive Dirichlet character \eqref{nontrivialgeneric} is essentially equivalent to the subconvexity problem for the twisted $L$-function $L(\pi\times\chi,1/2)$.
    \item[--] The {\em arithmetic progression} (a.p.) range
    $$X_{ap}=q^{\frac{d-1}2}.$$
    Indeed for $K=\Kl_d$ an hyper-Kloosterman sum in $d$ variables, \eqref{nontrivialgeneric} is directly related to the distribution of $\lambda_{\pi}(n)$ in large arithmetic progressions, i.e., the problem of evaluating the sum
    $$\sum_\stacksum{n\leq N}{n\equiv a\mods q}\lambda_{\pi}(n)$$
    for $N$ as small as possible relative to $q$; here a natural barrier is $$N_{ap}=q^{\frac{d+1}{2}}={q^d}/{X_{ap}}.$$
    Obtaining \eqref{nontrivialgeneric} within the a.p. range is usually quite hard in general and one often needs extra assumptions on the multiplicative function $n\mapsto \lambda_\pi(n)$ (such as being a multiplicative convolution see \cite{FI}). An intermediate objective would be to obtain  \eqref{nontrivialgeneric} for $X$ as close as possible to $X_{ap}$.
\end{itemize}

In \cites{FKM1,KLMS,KLM, LMS} we obtained \eqref{nontrivialgeneric} for $\GL_2,\GL_3,\GL_4$ and $\GL_2\times\GL_3$ $L$-functions for $X$ between the convexity and arithmetic progression ranges, when the modulus is prime and for quite general algebraic exponential sums $K$ (then called trace functions). In \cite{Lin-Michel}, we extended the results of \cites{FKM1,LMS} (covering the $\GL_2$ and $\GL_2\times\GL_3$ cases) to composite moduli of the shape $q=q_0q_1$ (with $q_0$ a prime and $q_1\geq 2$ an integer coprime with $q_0$) and improved the outcome over the  prime modulus case when $q_0$ is in specific ranges.

In this note, we complete \cite{Lin-Michel} by revisiting  \cite{KLMS} (the $\GL_3$-case) for composite moduli.

To state our main result we need to introduce some notation.

\begin{itemize}
    \item We denote by $\varphi$ a Hecke--Maass cuspform for $\GL_{3,\Qq}$ of level $1$ with Fourier coefficients $(\lambda_\varphi(n,r))_{n\geq 1,r\not=0}$ which is fixed.
    \item Let $q$ be a modulus of the shape $q=q_0q_1$ with $q_0$ a prime and $q_1\geq 2$  an integer coprime with $q_0$. 

We give ourselves an algebraic exponential sum $K:\Zz/q\Zz\mapsto \Cc$ of the shape $K=K_0.K_1$ for two functions $$K_0:\Zz/q_0\Zz,\ K_1:\Zz/q_1\Zz,$$ whose product is defined by precomposition with the Chinese Reminder Theorem isomorphism:
$$K:n\mapsto K(n)=K_0(n\mods {q_0})K_1(n\mods {q_1}).$$
\begin{itemize}
   \item[$\bullet$]  We assume that $K_0$ is the trace function of a middle extension sheaf on $\Aa^1_{\Fq}$, say $\mcF$, which is pure of weight $0$, geometrically irreducible and non-trivial. We denote by $C_0:=C(\mcF)$ its complexity (cf. \cite{FKM1}*{Def. \,1.13}) and assume that $C_0$ remains absolutely bounded as $q\ra\infty$.

\item[$\bullet$] For the function $K_1$, we will only require that its normalized Fourier transform $\what K_1:\Zz/q_1\Zz\mapsto \Cc$ defined by
\begin{equation}\label{FourierTransform-K}
\what K_1(n):=\frac{1}{q_1^{1/2}}\sum_{x\in\Zz/q_1\Zz}K_1(x)e\left(\frac{nx}{q_1}\right)
\end{equation}
has its supnorm absolutely bounded as $q\ra\infty$.
\end{itemize}

\item Finally for $Z\geq 1$ a real number, let $V\in \mcC^\infty_c(\Rr)$ be a smooth function, compactly supported in the interval $[1,2]$ which satisfy for each $j\geq 0$,
\begin{equation}\label{bound-of-V}
    V^{(j)}(x)\ll_j Z^j.
\end{equation}
\end{itemize}

Our main (mild) assumption regarding $K_0$ is that $\mcF$ is what we call {\em good}:
\begin{definition}
	\label{gooddef}
	For $q\geq 2$ a prime. A middle extension sheaf $\mcF$ on $\Aa^1_{\Fq}$, pure of weight $0$,  geometrically irreducible and non trivial, is good if the local monodromy of $\mcF$ at infinity has no indecomposable summand with slope equal to $1$. 
\end{definition}
\begin{remark*} Such a sheaf $\mcF$ is automatically Fourier: that was the single  assumption made on the trace function $K$ in \cite{KLMS}.
\end{remark*}
\begin{theorem}\label{mainthm2} 
Let $\varphi$, $q_0,q_1,q$, $K_0,K_1,K$, $Z,V$ be as above.

We assume that $K_0$ is the trace function of an $\ell$-adic sheaf $\mcF$ which is good in the sense of Definition \ref{gooddef}.

Let $X\geq 1$ be such that $X\geq Z^4q^2q_0^{-2}$; for any $\eps>0$ and any integer $r\geq 1$, we have
\begin{multline}
	\label{d3x2boundcomposite}
	\mcC_V(X;\vphi,K)=\sum_{n\geq 1}\lambda_\varphi(n,r)K(n)V\left(\frac{n}{X}\right)\\
    \ll_{\eps,\varphi,C_0} 
      X^{o(1)} Z\|\widehat{K_1}\|_{\infty}r^{1/2}\Bigg(X^{1/4}q^{1/2}{q_0^{1/2}}
      +X^{3/4}{q_0}^{3/4}+\frac{X^{3/4}q^{1/2}}{q_0^{1/2}}\Bigg).
\end{multline}
\end{theorem}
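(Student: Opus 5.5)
We follow the strategy of \cite{KLMS} (the $\GL_3$ delta-method/Voronoi approach of Munshi type, as streamlined there), adapting the modulus structure as in \cite{Lin-Michel}. First we separate the variable $n$ from the trace function. Writing $K=K_0K_1$, we expand $K_1$ by discrete Fourier inversion modulo $q_1$:
$$K_1(n)=\frac{1}{q_1^{1/2}}\sum_{h\bmod q_1}\what K_1(h)e\left(-\frac{hn}{q_1}\right),$$
so that $\mcC_V(X;\vphi,K)$ becomes an average over $h$ of sums $\sum_n\lambda_\vphi(n,r)K_0(n)e(-hn/q_1)V(n/X)$. These are sums of $\lambda_\vphi$ against a trace function modulo $q_0$ times an additive character modulo $q_1$. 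The factor $\|\what K_1\|_\infty$ is extracted at this stage, and the $h$-average is kept inside the analysis rather than bounded trivially. Bounding it trivially would lose $q_1^{1/2}$, which the claimed bound does not allow.

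Next we apply the $\GL_3$ machinery of \cite{KLMS}. We introduce an amplification/shift $n\mapsto n+\ell m$ and Kronecker delta detection of the equation $n=m$ with a conductor-lowering trick: the $q_0$-part is treated through the sheaf $\mcF$, and the $q_1$-part simply enters the additive character. We then apply $\GL_3$ Voronoi summation to the $n$-sum and Poisson summation to the $m$-sum. The additive twist modulo $q_1$ merges with the Voronoi modulus, so the dual sums have modulus roughly $q$ times the delta-method modulus. After Cauchy--Schwarz in the dual $\GL_3$ variable, which uses the Rankin--Selberg bound $\sum|\lambda_\vphi(n,r)|^2\ll r\,N^{1+o(1)}$ and produces the factor $r^{1/2}$, we open the square and apply Poisson again. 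This yields complete exponential sums that factor via the Chinese Remainder Theorem into a $q_1$-part and a $q_0$-part. The $q_1$-part consists of Ramanujan/Kloosterman-type sums in which the $h$-average can be computed exactly (orthogonality of additive characters), recovering the $q_1^{1/2}$. The $q_0$-part is the correlation sum of $\what K_0$-type Kloosterman transforms studied in \cite{KLMS}, which is bounded by $q_0^{O(1)}$ square-root cancellation. Here the good hypothesis (no slope-$1$ summand at $\infty$) is used: it guarantees that the relevant sheaf is Fourier and that the correlation sums have square-root cancellation except on a thin diagonal. This is exactly the input of \cite{KLMS}, now at prime modulus $q_0$ only.

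Finally, we collect the contributions and optimize the parameters. The zero frequency and diagonal terms give $X^{3/4}q_0^{3/4}$ and $X^{3/4}q^{1/2}q_0^{-1/2}$. The generic nondiagonal terms, after square-root cancellation modulo $q_0$ and exact evaluation modulo $q_1$, give $X^{1/4}q^{1/2}q_0^{1/2}$. The derivatives of $V$ produce the factor $Z$. The condition $X\geq Z^4q^2q_0^{-2}$ ensures that the dual lengths from Poisson and Voronoi are in the stated regime, namely that the parameter choice lies in the admissible range. We expect the main obstacle to be the complete sum analysis in the second step. There we must check that the composite-modulus exponential sum factorizes cleanly, that the $q_1$-component gives full cancellation in the $h$-average without any sheaf-theoretic input on $K_1$ beyond $\|\what K_1\|_\infty$, and that the degenerate loci modulo $q_1$ (non-squarefree parts, common divisors with the shift) contribute no more than the diagonal term $X^{3/4}q^{1/2}q_0^{-1/2}$. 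We would handle these loci first by a divisor-by-divisor count of gcd's with $q_1$.
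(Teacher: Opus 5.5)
Your plan has two genuine gaps: the treatment of $K_1$, which is the one step where the composite modulus must be absorbed without loss, and the sheaf-theoretic input modulo $q_0$. As you describe it, you expand $K_1$ into characters $e(-hn/q_1)$ with weights $q_1^{-1/2}\what K_1(h)$ and let the twist modulo $q_1$ ``merge with the Voronoi modulus'', so $\GL_3$ Voronoi is applied with modulus about $cq$. The dual $\GL_3$ sum then has length about $(cq)^3r/X$ instead of $(cq_0)^3r/X$. This is a loss of $q_1^3$ that undoes the conductor-lowering. The only mechanism you offer to compensate is to evaluate the $h$-average exactly by orthogonality of additive characters, and it is not available: the $h$-sum is weighted by $\what K_1(h)$, of which only $\|\what K_1\|_\infty$ is known, so there is no orthogonality in $h$ to exploit. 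The paper instead keeps all of $K=K_0K_1$ on the $m$-side of the delta symbol, with moduli $cq_0$, $c\le 2C$, $(c,q)=1$, $C=(X/q_0)^{1/2}$. Poisson in $m$ modulo $cq$ then gives $\what K(\bar c(m-q_1u))=\what K_0(\bar c(m-q_1u)\ov{q_1};q_0)\,\what K_1(\bar c m\ov{q_0};q_1)$ with dual length $M=ZC'q/X$, and Voronoi only involves the modulus $cq_0$. The factor $\what K_1(\bar cm\ov{q_0})$ depends only on $(c,m)$, stays inside the Cauchy--Schwarz, and is bounded by its sup norm at the end. No $q_1$-component ever appears among the complete sums: after Poisson in $n$ these are $\FT_1(n;q_0)$ and $\FT_2(n;k)$ with $k=rc_1c_2c_2'/n_1$. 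If you put your characters $e(-hn/q_1)$ on the Poisson side instead, each dual frequency $m$ fixes $h\equiv \bar c\,\ov{q_0}\,m \bmod q_1$, and the $h$-sum collapses to exactly this.

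The second gap is that the input modulo $q_0$ is not the one from \cite{KLMS}, which needed only that $\mcF$ be Fourier. The complete sum here is $\FT_1(n;q_0)=\sum_v Z(v)\ov{Z'(v-\delta)}$, where $Z$ is the multiplicative convolution of $K_0(x)e_{q_0}(\alpha x)$ with $x\mapsto \Kl_2(\beta/x)$, and $\alpha=\bar cm\ov{q_1}$, $\beta=\pm\bar c^3\bar r n_1^2$, $\delta=\bar k n$. The Artin--Schreier twist, which comes from the dual frequency $m$ surviving modulo $q_0$, is why the new Proposition~\ref{sqrootcancel1} is needed, and it is where goodness enters. For $\delta\neq 0$ one must rule out $\mcZ'\simeq[+\delta]^*\mcZ$. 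The paper does this by showing that $\mcZ=(\mcF\otimes\mcL_\alpha)\star[y\mapsto\beta/y]^*\KL_2$ is lisse on $\Gm$: with no slope $1$ at $\infty$, the Swan conductor at $\infty$ of $\mcF\otimes\mcL_\alpha\otimes[x\mapsto\beta x/v]^*\KL_2$ is independent of $v$, so Deligne's semicontinuity applies. It also shows that $\mcZ$ has a nontrivial singularity at $0$. For $\delta=0$ the main term is governed by $T_\mcF$ and $\mathrm{Aff}_\mcF$; together with $c=c'$, which Lemma~\ref{bound-k-part} forces at frequency $0$, this reduces to $m\equiv m'\bmod q_0$. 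The hypothesis $X\ge Z^4q^2q_0^{-2}$ gives $M\le q_0^{1/2}/Z$, so this means $m=m'$. Your final bookkeeping is also swapped. The zero frequency gives $X^{3/4}q_0^{3/4}$ (diagonal) and $X^{1/4}q^{1/2}q_0^{1/2}$ (off-diagonal, saving $q_0^{1/2}$), while the nonzero frequencies give $X^{3/4}q^{1/2}q_0^{-1/2}$. No amplifier or shift $n\mapsto n+\ell m$ is used anywhere.
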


 \begin{remark} 
 \par (a) To compare with \cite{KLMS}, we consider the case  $$q_0\asymp q^{2/5}$$ and $r=Z=1$; we obtain that for $X\geq q^{6/5}$
\begin{equation}\label{GL3-twist}
\mcC_V(X;\vphi,K)\ll_{\varphi,\|\widehat{K_1}\|_{\infty},C_0}X^{o(1)}\bigl(X^{1/4}q^{7/10}+X^{3/4}q^{3/10}\bigr).
\end{equation}
For $X=q^{3/2}$ (the convexity range) we obtain
$$\mcC_V(q^{3/2};\vphi,K)\ll_{\varphi,\|\widehat{K_1}\|_{\infty},C_0}q^{3/2-3/40+o(1)}.$$
This is better than the bound from \cite{KLMS}
$$\mcC_V(q^{3/2};\vphi,K)\ll_{\varphi,\|\widehat{K_1}\|_{\infty},C_0}q^{3/2-1/36+o(1)}$$
(for $q$ a prime).
Moreover \eqref{GL3-twist} is non-trivial {\rm(}i.e., \eqref{nontrivialgeneric} holds{\rm)} as soon as
\begin{equation}\label{range-GL3}
X\geq q^{3/2-3/10+\eta}
\end{equation}
for some $\eta>0$ (notice that $q^{3/2-3/10}=q^2/q_0^2$); while this does not yet reach the a.p. range, this is better than the range $X\geq q^{3/2-1/6+\delta}$ obtained in \cite{KLMS} for the prime modulus case; moreover this matches the range $X\geq |t|^{3/2-3/10+\delta}$ in the $t$-aspect (i.e., $K(n)\leadsto n^{it}$)
proven by Aggarwal \cite{Agg21} (see \cite{Huang21} for further extensions).

\par (b) Improving \eqref{range-GL3} to $$X\geq q^{1-\eta},\ \eta>0$$ for $\vphi$ cuspidal and some modulus $q$ remains a very interesting unsolved problem. When $\vphi$ is not cuspidal the a.p. range has been passed for $\lambda_\varphi(n,1)=d_3(n)=1\star 1\star 1(n)$ the ternary divisor function by Friedlander--Iwaniec in their groundbreaking work \cite{FI} and for $\lambda_\varphi(n,1)=1\star \lambda_f(n)$ with $f$ a $GL_2$-cusp form by Kowalski, Sawin and the second named author \cite{KMS} when $q$ is a prime. This was for $K=\Kl_k$ $k\geq 2$, hyper-Kloosterman sums; by now the result \cite{FKMS} allows to handle much more general classes of trace functions.
\end{remark}

In \eqref{GL3-twist} taking $K=\chi$ to be the primitive Dirichlet character of modulus $q$ (the conditions of Definition \ref{gooddef} are satisfied since a Kummer sheaf is tamely ramified at $\infty$) and applying the approximate functional equation for the twisted $L$-function $$L(\varphi\times \chi,s)=\sum_{n\geq 1}\frac{\lambda_{\varphi}(n,1)\chi(n)}{n^s},$$
we readily obtain that
\begin{corollary}
Assume that $q$ has a prime factor $q_0$ satisfying $$q_0= q^{2/5+o(1)}.$$
Then we have 
\begin{equation*}
    L(\varphi\times \chi,1/2)\ll_{\varphi} q^{3/4-3/40+o(1)}.
\end{equation*}
\end{corollary}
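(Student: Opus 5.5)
The plan is to deduce the corollary from \eqref{GL3-twist}, i.e.\ from Theorem \ref{mainthm2} specialized to $K_0=\chi_0$, $K_1=\chi_1$, $r=1$, and then pass to the central value via an approximate functional equation. First I check that the theorem applies. Since $q_0$ is a prime with $q_0=q^{2/5+o(1)}$ and $q_0\mid q$, write $q=q_0q_1$. For the moment assume $(q_0,q_1)=1$; the general case is discussed below. Then $\chi=\chi_0\chi_1$ with $\chi_0$ primitive modulo $q_0$ and $\chi_1$ primitive modulo $q_1$.

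\begin{itemize}
\item $K_0=\chi_0$ is the trace function of the Kummer sheaf $\mcL_{\chi_0}$ (extended by zero at $0$). This sheaf has rank one, is geometrically irreducible, non-trivial, pure of weight $0$, and has bounded conductor. It is tame at $\infty$, so all slopes are $0$ and it is good in the sense of Definition \ref{gooddef}.
\item For $K_1=\chi_1$, the normalized Fourier transform satisfies $\what{\chi_1}(n)=\ov{\chi_1}(n)\,\eps_{\chi_1}$ with $|\eps_{\chi_1}|=1$ by primitivity. Hence $\|\what K_1\|_\infty\leq 1$.
\end{itemize}

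Next, the approximate functional equation for $L(\varphi\times\chi,s)$, whose conductor is $\asymp q^3$, gives
\[
L(\varphi\times\chi,1/2)=\sum_{n}\frac{\lambda_\varphi(n,1)\chi(n)}{n^{1/2}}W_1\Bigl(\frac n{q^{3/2}}\Bigr)+\eps\sum_n\frac{\ov{\lambda_\varphi(n,1)}\,\ov\chi(n)}{n^{1/2}}W_2\Bigl(\frac n{q^{3/2}}\Bigr),
\]
with $|\eps|=1$ and $W_i$ decaying rapidly past $1$. The dual sum is the same type of sum for the dual form $\tilde\varphi$ and the character $\ov\chi$, so it is treated identically.

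I split dyadically with a smooth partition of unity. This produces $O(\log q)$ sums of the form $X^{-1/2}\mcC_V(X;\varphi,\chi)$ with $V$ supported in $[1,2]$. The weights $V$ satisfy \eqref{bound-of-V} with $Z=1$; the factor $W_i(X/q^{3/2})$ has bounded derivatives in the variable $n/X$ for $X\leq q^{3/2+\eps}$. The range $X>q^{3/2+\eps}$ contributes $O(q^{-A})$.

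The dyadic ranges are handled as follows.
\begin{itemize}
\item For $X\leq q^{3/2-3/40}$, I use the trivial bound via Rankin--Selberg, $\sum_{n\leq X}|\lambda_\varphi(n,1)|\ll X$. This gives $X^{1/2}\leq q^{3/4-3/40}$.
\item For $q^{3/2-3/40}<X\leq q^{3/2+\eps}$, we have $X\geq q^{6/5}=q^2q_0^{-2}\cdot q^{o(1)}$, so \eqref{GL3-twist} applies. It gives
\[
X^{-1/2}\mcC_V\ll q^{o(1)}\bigl(X^{-1/4}q^{7/10}+X^{1/4}q^{3/10}\bigr).
\]
The second term is at most $q^{3/8+3/10+o(1)}=q^{3/4-3/40+o(1)}$. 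The first is at most $q^{7/10-(3/2-3/40)/4+o(1)}$, which is smaller.
\end{itemize}

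Summing the $O(\log q)$ ranges gives the claimed bound $L(\varphi\times\chi,1/2)\ll q^{3/4-3/40+o(1)}$.

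The main obstacle is bookkeeping rather than analysis, and it has two parts. First, \eqref{GL3-twist} must be used in its precise form with $q_0=q^{2/5+o(1)}$ rather than $q_0\asymp q^{2/5}$; the $q^{o(1)}$ losses from this are absorbed into the final $o(1)$. Second, the coprimality $(q_0,q_1)=1$ must hold. If $q_0^2\mid q$, I would take $q_1=q/q_0$ and, if necessary, replace $q_0$ in the theorem by the full $q_0$-power part of $q$. I expect this case to be excluded implicitly by the standing hypothesis of the paper, so the coprime case above is the substantive one.
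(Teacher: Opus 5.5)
Your route is the paper's: apply \eqref{GL3-twist}, i.e.\ Theorem \ref{mainthm2} with $K_0=\chi_0$, $K_1=\chi_1$, $r=Z=1$, to the dyadic pieces of the approximate functional equation. Your checks that the Kummer sheaf is good and that $\|\widehat{\chi_1}\|_\infty\le 1$ are correct.

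There is, however, an arithmetic slip in the splitting that breaks the bound as written.

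\begin{itemize}
\item For $X\le q^{3/2-3/40}$ the trivial bound gives $X^{1/2}\le q^{3/4-3/80}$, not $q^{3/4-3/40}$. Since $q^{3/4-3/80}$ is larger than the target, your argument only proves the weaker exponent.
\item The fix is to lower the threshold. The trivial bound $X^{1/2}\le q^{27/40}$ needs $X\le q^{27/20}=q^{3/2-3/20}$.
\item Meanwhile \eqref{GL3-twist} is available as soon as $X\ge q^2q_0^{-2}=q^{6/5+o(1)}$. On that whole range it gives $X^{-1/4}q^{7/10}\le q^{2/5+o(1)}$ and $X^{1/4}q^{3/10}\le q^{3/8+3/10+o(1)}=q^{27/40+o(1)}$.
\item So any threshold $X_0$ with $q^{6/5+\eta}\le X_0\le q^{27/20}$ gives $q^{3/4-3/40+o(1)}$. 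For instance, with $X_0=q^{6/5+\eta}$ the trivial range costs only $q^{3/5+\eta/2}$.
\end{itemize}

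On coprimality, your fallback of replacing $q_0$ by the full $q_0$-power part of $q$ does not work. Theorem \ref{mainthm2} needs $q_0$ prime, because its key input, Proposition \ref{sqrootcancel1}, is a statement about sheaves over $\mathbf{F}_{q_0}$. The paper's one-line deduction tacitly works under its standing convention $q=q_0q_1$ with $(q_0,q_1)=1$. So the coprime case you treat is exactly what the paper proves; the case $q_0^2\mid q$ is simply not covered by this argument, and you should drop the suggested workaround rather than rely on it.
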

This improves the subconvexity exponent obtained in \cites{Munshi15b, Hol-Nel} in the case of the prime modulus $q$ and also the exponent for composite moduli obtained in \cite{Munshi15}.
\begin{remark} 
It is relatively straightforward, using the same method, to incorporate the $t$-component and obtain the bound
\begin{equation*}
    L(\varphi\times \chi,1/2+it)\ll_{\varphi} \big(q(|t|+2)\big)^{3/4-3/40+o(1)}.
    \end{equation*}
   This removes the restriction that $q<(|t|+2)^{8/7}$ in the work \cite{KKL} of K\i ral, Kuan, and Lesesvre, at the cost of requiring $q$ to have a divisor of appropriate size.
\end{remark}
\smallskip

Let $\lambda_{1\boxplus{\varphi}}(n)=1\star\lambda_\vphi(\bullet,1)(n)=\sum_{m|n}\lambda_{{\varphi}}(m,1)$. Applying Theorem \ref{mainthm2} to the trace function
$$K(n)=\Kl_4(an;q)$$
(the conditions of Definition \ref{gooddef} are satisfied since the hyper-Kloosterman sheaf $\KL_4$ is Fourier and its only slope at $\infty$ is $1/4$)
we have 
\begin{corollary}\label{mainarith} 
Using the notation from the
theorem. Assume that $q$ has a prime factor $q_0$ satisfying $$q_0= q^{2/5+o(1)}.$$
Given any $\eta>0$. For any $(a,q)=1$ and for $q$ satisfying
$$q\leq X^{25/61-\eta}=X^{2/5+3/305-\eta},$$
we have
$$\mathop{\sum_{n\geq 1}}_{n\equiv a\bmod q}\lambda_{1\boxplus\varphi}(n)V\left(\frac{n}{X}\right)-\frac{1}{\varphi(q)}\sum_\stacksum{n\geq 1}{(n,q)=1}\lambda_{1\boxplus\varphi}(n)V\left(\frac{n}{X}\right)\ll_{\varphi,\eta, V}  (\frac{X}{q})^{1-\delta}$$
for some $\delta=\delta(\eta)>0$.
\end{corollary}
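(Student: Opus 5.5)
The plan is the standard Fourier-expansion argument for the characteristic function of a residue class, run in reverse through the additive-character decomposition, with Theorem \ref{mainthm2} supplying the only non-trivial input.

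\emph{Step 1: reduction to twisted sums.} By orthogonality of additive characters modulo $q$ we write
$$\mathbf{1}_{n\equiv a\mods q}=\frac1q\sum_{h\bmod q}e\Bigl(\frac{h(n-a)}{q}\Bigr).$$
We then group the frequencies $h$ according to $d=(h,q)$. For each divisor $q'=q/d$, the contribution of the $h$ with $(h,q)=d$, averaged against $\lambda_{1\boxplus\varphi}(n)$, is a Ramanujan-type sum. The main term will be matched by the coprime-to-$q$ average, after the usual manipulation. It is cleaner to use the classical identity
$$\mathbf{1}_{n\equiv a(q)}-\frac{\mathbf{1}_{(n,q)=1}}{\varphi(q)}=\frac{1}{q^{1/2}}\sum_{x}\Bigl(\ldots\Bigr)$$
expressing this difference through $\Kl_k$-type functions. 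Following \cite{KMS} and \cite{FI}, we expand $\lambda_{1\boxplus\varphi}=1\star\lambda_\varphi(\cdot,1)$ and apply Poisson summation modulo $q$ in the smooth variable $m$ from the factor $1$. This turns the sum over $n=m\ell$, $n\equiv a$, into a dual sum of length about $q/M$ in $m$, twisted by $e(a\overline{\ell}h/q)$.

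\emph{Step 2: obtaining the $\Kl_4$ twist.} A further application of the $\GL_3$ Voronoi formula (or, equivalently, the functional equation viewpoint adopted in \cite{KMS}) converts the remaining sums into correlations of the shape
$$\sum_n\lambda_\varphi(n,r)\Kl_4(bn;q)W(n/Y).$$
Here $r\mid q^\infty$ is handled trivially. For the pieces with $q_0\mid d$ or degenerate factorizations we use the trivial bounds; these are admissible since $d\ge q_0\asymp q^{2/5}$ saves a lot.

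\emph{Step 3: applying Theorem \ref{mainthm2}.} We apply Theorem \ref{mainthm2} with $K=\Kl_4$, which is good, so its trace function factors as $K_0K_1$ by twisted multiplicativity; $\widehat{K_1}$ is a $\Kl_3$-type sum, hence bounded. With $q_0=q^{2/5}$ we get the bound \eqref{GL3-twist}, namely $Y^{1/4}q^{7/10}+Y^{3/4}q^{3/10}$.

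\emph{Step 4: optimization.} We dyadically partition $m$ and $\ell$ with $m\ell\asymp X$. If $\ell$ is small, the $m$-sum over the progression is evaluated directly; its main term cancels with the $\varphi(q)^{-1}$ term, and the error is $O(\ell\,q^{\eps})$ by Poisson. If $\ell$ is large, we dualize $m$ and use Step 3 with $Y\approx \ell$, assuming $Y\ge q^{6/5}$. Balancing these two ranges against the target $(X/q)^{1-\delta}$ yields the exponent condition $q\le X^{25/61-\eta}$. We still have to check that the ranges overlap exactly at this exponent.

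\emph{Main obstacle.} I expect the main obstacle to be Step 2: correctly carrying out the Poisson step so that the resulting kernel is exactly $\Kl_4(an;q)$ up to harmless factors, and controlling the non-coprime frequencies for composite $q$, where $\Kl_4$ modulo $q_1$ is not a sheaf trace. This is why Theorem \ref{mainthm2} requires only $\|\widehat K_1\|_\infty\ll1$.
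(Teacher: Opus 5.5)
\paragraph{A genuine gap: the middle range.}
Steps 1--3 are essentially the paper's reduction. Poisson in the smooth variable $m$ followed by $\GL_3$ Voronoi in $\ell$ amounts to the $\GL_4$ duality principle that the paper applies in one stroke. With $m\sim M$, $\ell\sim L$, $ML\asymp X$, you arrive at sums
\[
\frac{X}{q^{5/2}}\sum_{h\sim H}\sum_{n\sim Y}\lambda_{\ov\varphi}(n,1)\Kl_4(ahn;q),\qquad H\approx q/M,\quad Y\approx q^3/L,\quad HY\approx q^4/X .
\]
Note that the length of the $\Kl_4$-twisted sum is $Y\approx q^3/L$, not ``$Y\approx\ell$''.

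The gap is in Step 4. Direct evaluation and Theorem \ref{mainthm2} together do not cover all dyadic ranges, so no balancing of your two cases can produce $25/61$. Applying the theorem to each $n$-sum produces the term
\[
\frac{X}{q^{5/2}}\,H\,Y^{3/4}q^{3/10}=X^{1/4}q^{4/5}H^{1/4}.
\]
This is $\leq (X/q)^{1-\delta}$ only if $q\leq X^{5/12-\eta}H^{-5/36}$. At $q=X^{25/61}$ that forces $H\lesssim X^{3/61}=q^{3/25}$, i.e.\ $M\gtrsim q^{22/25}$. Moreover, for $H>q^{14/5}/X$ the theorem does not even apply, since then $Y<q^{6/5}$.

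Direct evaluation of the $m$-sum only handles $M\geq q^{1+\eps}$. Hence the whole range $1\leq M\lesssim q^{22/25}$ is left untreated; equivalently $q^{3/25}\lesssim H\leq q$. Reading Step 4 literally, with the theorem applied to the length-$L$ sum twisted by $e(ah\ov{\ell}/q)$, is even worse. The ranges do not ``overlap at this exponent''.

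\paragraph{What the paper uses there.}
The missing inputs come from \cite{Lin-Michel}. For a long smooth variable, $H\geq q^{8/15+\eta}$, the paper uses \cite{Lin-Michel}*{\S 3.2}. For $H\leq q^{1-\eta}$ it uses bilinear (sums-of-products) estimates for $\Kl_4(ahn;q)$ in the spirit of \cite{KMS}, as carried out in \cite{Lin-Michel}*{\S 3.3}. These give the required saving provided $q\leq X^{2/5-\eta}H^{1/5}$.

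The exponent $25/61$ is exactly where this condition meets the one coming from Theorem \ref{mainthm2}: $X^{5/12}H^{-5/36}=X^{2/5}H^{1/5}$ at $H=X^{3/61}$, which yields $q\leq X^{2/5+3/305}$. So the real obstacle is not the Poisson/Voronoi bookkeeping you flag in Step 2. It is the intermediate range, which cannot be handled without these bilinear bounds for $\Kl_4$.
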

This is better than \cite{KLM}*{Thm.\,1.6} where (for a prime modulus $q$) the level of distribution $2/5+1/260$ was obtained; interestingly, the exponent $3/305$ matches exactly the exponent $3/305$ of \cite{Huang24}*{Thm. 3} concerning the error term of the Rankin--Selberg problem\footnote {which itself was inspired by \cite{KLM}; see \cite{Huang24}*{p.2}.}. See also \cite{Zhu} in the case $q=p^k$.

\subsection*{Principle of the proof} 
The proof of Theorem \ref{mainthm2} is based on (variants of) the $\delta$-symbol method pioneered by Munshi \cite{Munshi15} in the case $K=\chi$. 

Here we follow closely the approach of \cites{LMS} and start by separating $\lambda(r,n)$ and $K(n)$ in the correlation sum
$$\sum_{n=1}^{\infty}\lambda(r,n)K(n)V\left(\frac{n}{X}\right)$$
through a form of the $\delta$-symbol method. As in \cite{Lin-Michel} we take advantage of the factorisation $q=q_0q_1$ through a ``conductor-lowering" trick (modulo ${q_0}$) in the application of the $\delta$-symbol. This gives us the flexibility of detecting the equation $n=0$ in two steps: $n\equiv 0\bmod {q_0}$ and $n/{q_0}=0$. The firt is via orthogonality of additive chracter and the second via the $\delta$-symbol of Duke, Friedlander, and Iwaniec \cite{DFI1.5}, but now with a reduced size in the choice of the parameter $C$.

Much as in \cite{KLMS}, the contribution of the factor $K_1$ is absorbed very softly and requires only that the $\infty$-norm of its Fourier transform $\what K_1$ be absolutely bounded (is $q_1$ a prime this follows if $K_1$ is the trace function of a sheaf with absolutely bounded complexity). Handling the contribution of $K_0$ however is more delicate and requires to bound correlation sums of the shape
$$\sum_{x\in\Ff_{q_0}}Z(v)\ov{Z'}(v+\delta)$$ where $Z,Z'$ are two trace functions formed from $K_0$ via multiplicative convolution against some pull-back of a Kloosterman sum in one variable (see Prop. 
\ref{sqrootcancel1} and \S \ref{secsqroot}). These sums are very similar to the sums treated in \cite{LMS}*{Prop 4.5} (although a bit simpler) and we follow closely the proof of loc.cit.

 \subsection*{Acknowledgments}  The authors would like to thank Tengyou Zhu for an inquiry that led us to writing this note.
 
\section{Proof of Corollary \ref{mainarith}}

In this section, we show how to derive Corollary  \ref{mainarith} from  Theorem \ref{mainthm2}.

 Let $q=q_0q_1$. We apply the \emph{duality principle} \cite{LMS}*{Cor. 9.2} to the sum
$$\sum_{n\geq 1}\lambda_{1\boxplus\varphi}(n)K(n)V\left(\frac{n}{X}\right)$$
where
$$K(n)=q^{1/2}\delta_{n\equiv a\bmod  q}.$$
Setting $\widecheck X=q^4/X$, this gives (up to some negligible error terms) 
$$
\mathop{\sum_{n\geq 1}}_{n\equiv a\bmod q}\lambda_{1\boxplus\varphi}(n)V\left(\frac{n}{X}\right)-\frac{1}{\varphi(q)} \mathop{\sum_{n\geq 1}}_{(n,q)=1}\lambda_{1\boxplus\varphi}(n)V\left(\frac{n}{X}\right)=\frac{X}{q^{5/2}}\sum_{n\geq 1}\lambda_{1\boxplus\ov{\varphi}}(n)\mathrm{Kl}_4(an;q)\widecheck{V}\left(\frac{n}{\widecheck{X}}\right).$$
Inserting the definition 
$$\lambda_{1\boxplus\ov{\varphi}}(n)=\sum_{lm=n}\lambda_{\ov{\varphi}}(m,1)$$ and subdividing the variables $l, m$ into dyadic ranges, we are reduced to bounding
\begin{equation}
    \label{MNsums}
    \frac{X}{q^{5/2}}\sum_{l\geq 1}\sum_{m\geq 1}\lambda_{\ov{\varphi}}(m,1)\mathrm{Kl}_4(al m;q)V_1(\frac{l }{L})V_2(\frac{m}{M})
\end{equation}
for $O(\log^2 X)$ many real numbers $L,M\geq 1$ satisfying 
\begin{equation}
    \label{LMupperbound}
    LM\ll \frac{q^4}{X}.
\end{equation}
Here $V_1,V_2$ are smooth functions compactly supported in $[1,2[$.

From now on we assume that $q_0$ is such that $$q_0=q^{2/5+o(1)}.$$ 
\subsection{The case $M\geq q^{6/5-\delta}$}
Applying Theorem \ref{mainthm2}, the double sum in \eqref{MNsums} is bounded by
\begin{equation*}
\begin{split}
 \frac{XL}{q^{5/2}}\big|\sum_{m\geq 1}\lambda_{\ov{\varphi}}(m,1)\mathrm{Kl}_4(al m;q)V_2(\frac{m}{M})\big|
\ll& X^{o(1)} \frac{XL}{q^{5/2}}M^{3/4}q^{3/10}\\
\ll& X^{o(1)} \frac{X}{q}\bigl(\frac{L^5 q^{36}}{X^{15}}\bigr)^{20},
\end{split}
\end{equation*}
which is $\leq (X/q)^{1-\delta}$ as long as 
$$q\leq X^{5/12-\eta}L^{-5/36}$$
for some fixed $\eta>0$.

\subsection{The case $L\geq q^{1/2}$}
According to the calculations in \cite{Lin-Michel}*{\S 3.2}, we have
\begin{equation*}
    \frac{X}{q^{5/2}}\sum_{l\geq 1}\sum_{m\geq 1}\lambda_{\ov{\varphi}}(m,1)\mathrm{Kl}_4(al m;q)V_1(\frac{l }{L})V_2(\frac{m}{M})\ll (X/q)^{1-\delta}
\end{equation*}
as long as 
$$ L\geq q^{8/15+\eta}.$$

\subsection{The case $L\leq q^{1-\eta}$} 
According to \cite{Lin-Michel}*{\S 3.3}, we know
\begin{equation*}
    \frac{X}{q^{5/2}}\sum_{l\geq 1}\sum_{m\geq 1}\lambda_{\ov{\varphi}}(m,1)\mathrm{Kl}_4(al m;q)V_1(\frac{l }{L})V_2(\frac{m}{M})\ll (X/q)^{1-\delta}
\end{equation*}
as long as 
$$   q\leq X^{2/5-\eta}L^{1/5}.$$

Let $L_0=X^{3/61}$ be the solution of 
$$X^{5/12}L^{-5/36}=X^{2/5}L^{1/5}=X^{2/5+3/305}.$$
By combining the above three cases and following the analysis presented in \cite{Lin-Michel}*{\S 3.4}, we know that there exists some $\delta=\delta(\eta)>0$ such that 
$$\mathop{\sum_{n\geq 1}}_{n\equiv a\bmod q}\lambda_{1\boxplus\varphi}(n)V\left(\frac{n}{X}\right)-\frac{1}{\varphi(q)} \mathop{\sum_{n\geq 1}}_{(n,q)=1}\lambda_{1\boxplus\varphi}(n)V\left(\frac{n}{X}\right)\ll (X/q)^{1-\delta}$$
as long as 
$$X^{2/5-\eta}\leq q\leq X^{25/61-\eta}.$$ 

This completes the proof of Corollary \ref{mainarith}.

\section{Proof of Theorem \ref{mainthm2}}

\subsection{First transformations} 
We assume that $Z$ satisfies
$1\leq Z\leq q$ and  $V\in \mcC^\infty_c(\Rr)$  satisfies the bound \eqref{bound-of-V}.
We suppress the dependence on $\varphi$ and denote by $\lambda(r,n)$ 
 the $(r,n)$-th Fourier coefficient of $\varphi$. As in \cite{KLMS}, we write
\begin{equation}\label{Sdef}
\mcC_{V,r}(X;K)=\sum_{n=1}^{\infty}\lambda(r,n)K(n)V\left(\frac{n}{X}\right).
\end{equation}

We can write $\mcC_{V,r}(X;K)$ as 
\begin{equation}
     \mcC_{V,r}(X;K)=\sum_{n=1}^{\infty}\lambda(r,n)\sum_{m=1}^{\infty}K(m)\delta_{n=m}U\left(\frac{n}{X}\right)V\left(\frac{m}X\right).    \label{SVXfirst}
\end{equation}
Here $U$ is a smooth function supported in $(1/100,100)$ and satisfying $U(x)=1$ for $x\in [1,2]$ and $U^{(j)}(x)\ll_j 1$ for $j\geq 0$.

Let $C=(X/q_0)^{1/2}$. We apply a version of the delta method due to Duke--Friedlander--Iwaniec (\cite{DFI1.5}) 
\begin{eqnarray*}
\delta_{n=0}&=&\frac{1}{C}\sum_\stacksum{c\leq  2C}{(c,q)=1}\frac{1}{c{q_0}}\sumstar_{u(c{q_0})}e\left(n\frac{u}{c{q_0}}\right)h\left(\frac{c}{C},\frac{n}{C^2{q_0}}\right)\nonumber\\
&+&\frac{1}{C}\sum_\stacksum{c\leq  2C}{(c,q)=1}\frac{1}{c{q_0}}\sumstar_{a(c)}e\left(n\frac{a}{c}\right)h\left(\frac{c}{C},\frac{n}{C^2{q_0}}\right)\label{eqdelta}\\
&+&
\frac{1}{C}\sum_{c\leq  2C/q}\frac{1}{cq{q_0}}\sumstar_{a(cq{q_0})}e\left(n\frac{a}{cq{q_0}}\right)h\left(\frac{cq}{C},\frac{n}{C^2{q_0}}\right)+O_A\left(C^{-A}\right)\nonumber
\end{eqnarray*}
as presented in \cite{HB}*{Thm. 1} and \cite{LMS}*{(3.7)} to $\delta_{n=m}$ in \eqref{SVXfirst}, and we obtain
\begin{equation}\label{S'sumErr}
\mcC_{V,r}(X;K)=\mathrm{Main}+\mathrm{Err}_1+\mathrm{Err}_2+O_A\left(X^{-A}\right)
\end{equation}
where
\begin{multline}\label{S'sum}
       \mathrm{Main}=\frac{1}{Cq_0}\sum_\stacksum{c\leq  2C}{(c,q)=1}\frac{1}{c}\sumstar_{u(cq_0)}\sum_{n=1}^{\infty}\lambda(r,n)e\left(\frac{un}{cq_0}\right)U\left(\frac{n}{X}\right)\\
        \times\sum_{m=1}^{\infty}K(m)e\left(\frac{-um}{cq_0}\right)V\left(\frac{m}X\right)h\left(\frac{c}{C},\frac{n-m}{C^2q_0}\right),
\end{multline}
and
\begin{multline*}
        \mathrm{Err}_1=\frac{1}{Cq_0}\sum_\stacksum{c\leq  2C}{(c,q)=1}\frac{1}{c}\sumstar_{a(c)}\sum_{n=1}^{\infty}\lambda(r,n)e\left(\frac{an}{c}\right)U\left(\frac{n}{X}\right)\\
        \times\sum_{m=1}^{\infty}K(m)e\left(\frac{-am}{c}\right)V\left(\frac{m}X\right)h\left(\frac{c}{C},\frac{n-m}{C^2q_0}\right),
\end{multline*}
\begin{multline*}
        \mathrm{Err}_2=\frac{1}{Cq{q_0}}\sum_\stacksum{c\leq  2C/q}{(c,q)=1}\frac{1}{c}\sumstar_{a(cqq_0)}\sum_{n=1}^{\infty}\lambda(r,n)e\left(\frac{an}{cqq_0}\right)U\left(\frac{n}{X}\right)\\
        \times\sum_{m=1}^{\infty}K(m)e\left(\frac{-am}{cqq_0}\right)V\left(\frac{m}X\right)h\left(\frac{cq}{C},\frac{n-m}{C^2{q_0}}\right).
\end{multline*}

As in \cite{Lin-Michel}, we focus our analysis on the term $\mathrm{Main}$ in \eqref{S'sum} which is the hardest and is responsible for the final bound. The treatment for the terms $\mathrm{Err}_1,\mathrm{Err}_2$ are very similar to the one presented in \cite{LMS}*{\S 7.1}, and just as in \cite{LMS} their contribution turns out to be smaller as compared to that of $\mathrm{Main}$. As such we completely skip their treatments and the reader is referred to \cite{LMS}*{\S 7.1} for the relevant details.

\subsubsection{Bounding $\mathrm{Main}$} 

We apply the Poisson summation formula to the $m$-sum in \eqref{S'sum}, to get
\begin{multline}
\sum_{m\geq 1}(\cdots)=\frac{X}{cq}\sum_{m\in \mathbb{Z}}\sum_{\beta(q)}K(\beta)e\left(\frac{-u\beta\bar{c}}{q_0}\right)e\left(\frac{m\beta\bar{c}}{q}\right)\sum_{\beta(c)}e\left(\frac{-\beta u\ov{q_0}}{c}\right)e\left(\frac{m\beta\bar{q}}{c}\right)\widehat{\mathcal{V}}\left(n,\frac{mX}{cq}\right)\\
=\frac{X}{q^{1/2}}\sum_\stacksum{m\in \mathbb{Z}}{u\equiv \ov{q_1}m(c)}\widehat{K}(\bar{c}(m-q_1u))\widehat{\mathcal{V}}\left(n,\frac{mX}{cq}\right)
\end{multline}
where $\widehat{K}$ is the normalized Fourier transform \eqref{FourierTransform-K} and \begin{equation}\label{FourierT-of-V}
\widehat{\mathcal{V}}(n, x)=\int_{\Rr}V(y)h\left(\frac{c}{C},\frac{n-yX}{C^2{q_0}}\right)e(-xy)\rm{d}y
\end{equation}is the Fourier transform of  $V(y)h\left(\frac{c}{C},\frac{n-yX}{C^2{q_0}}\right)$. We also note that the weight function $\widehat{\mathcal{V}}\left(n,\frac{mX}{cq}\right)$ restricts the effective range of the dual $m$-sum to 
\begin{equation}\label{truncate-m}
|m|\leq X^{o(1)}Zcq/X.
\end{equation}
 By this we mean that for any $\eps>0$, the contribution of the terms satisfying $$|m|> X^{\eps}Zcq/X$$
is bounded by $O_{A,\eps}(X^{-A})$ for any $A\geq 1$.

We find that  \eqref{S'sum} can be rewritten as
\begin{multline}\label{S'sum-2}
       \mathrm{Main}=\frac{1}{Cq_0}\sum_\stacksum{c\leq  2C}{(c,q)=1}\frac{1}{c}\sumstar_{u(cq_0)}\sum_{n=1}^{\infty}\lambda(r,n)e\left(\frac{un}{cq_0}\right)U\left(\frac{n}{X}\right)\\
        \times \frac{X}{q^{1/2}}\sum_\stacksum{m\in \mathbb{Z}}{u\equiv \ov{q_1}m(c)}\widehat{K}(\bar{c}(m-q_1u))\widehat{\mathcal{V}}\left(n,\frac{mX}{cq}\right).
\end{multline}

We next apply the Voronoi summation of \cite{Miller-Schmid} (see \cite{LMS}*{Prop. 2.2}) to the $n$-sum in \eqref{S'sum-2}, which gives
\begin{multline}\label{eqafter1stvoronoi}
           \sum_{n=1}^{\infty}\lambda(r,n)e\left(\frac{un}{cq_0}\right)U\left(\frac{n}{X}\right)\widehat{\mathcal{V}}\left(n,\frac{mX}{cq}\right)\\=c{q_0}\sum_{\pm}\sum_{n_1|rcq_0}\sum_{n\geq 1}\frac{\lambda(n,n_1)}{nn_1}S(r\ov u,\pm n;\frac{rcq_0}{n_1})\mcW_{\pm}(\frac{m}{cq/X},\frac{n_1^2n}{c^3q^3_0r/X})  ,
        \end{multline}
where after inserting \eqref{FourierT-of-V},
\begin{equation*}\label{double-weight-function}
\begin{split}
\mcW_{\pm}(y,z)=&z\int_{\mathbb{R}}V(x)\left(\int_{0}^{\infty}U(\xi)h\bigl(\frac{c}{C},\frac{X(\xi-x)}{C^2{q_0}}\bigr)J_{\vphi,\pm}(z\xi){\mathrm{d}\xi}\right) \, e(-xy){\mathrm{d}x};
  \end{split}\end{equation*}
  see \cite{Qi}*{(3.34)}  for the definition of $J_{\vphi,\pm}(\xi)$
  and  \cite{LMS}*{(3.19)} for details.
% with $$W_x(\xi):=U(\xi)h\left(\frac{c}{C},\frac{X(\xi-x)}{C^2{q_0}}\right);$$

If $z\gg 1$, by using integration by parts to the inner integral over $\xi$, we know that
\begin{equation}\label{decay-of-W}
\mcW_{\pm}(y,z)\ll_{A} (1+|z|)^{-A}.
\end{equation}
For $z\ll 1$, from \cite{LMS}*{(2.9)} we know that $\mcW_{\pm}(y,z)\ll z^{2/3}$; in particular, we have
\begin{equation}\label{Jacquet-Shalika}
\mcW_{\pm}(y,z)\ll z^{1/2}.
\end{equation}

Substituting back, we obtain a sum of the form
\begin{multline}\label{eqaftervoronoi}
       \mathrm{Main}=\frac{X}{Cq^{1/2}}\sum_{\pm}\sum_\stacksum{c\leq  2C}{(c,q)=1}\sum_{n_1|rcq_0}\sum_{n\geq 1}\frac{\lambda(n,n_1)}{nn_1}\\
       \times\sumstar_{u(cq_0)} \sum_\stacksum{m\in \mathbb{Z}}{u\equiv \ov{q_1}m(c)}\widehat{K}(\bar{c}(m-q_1u))S(r\ov u,\pm n;\frac{rcq_0}{n_1})\mcW_{\pm}(\frac{m}{cq/X},\frac{n_1^2n}{c^3q^3_0r/X}).
\end{multline}
Notice that for $(q_0,q_1)=1$ we have the twisted multiplicativity
% $$    \widehat{K}(b;{q_0}{q_1})=\widehat{K_0}(\overline{{q_1}}b;{q_0}) \widehat{K_1}(\overline{{q_0}}b;{q_1}),$$
\begin{equation*}
\widehat{K}(\bar{c}(m-q_1u);{q_0}{q_1})=\widehat{K_0}(\bar{c}(m-q_1u)\ov{q_1};{q_0}) \widehat{K_1}(\bar{c}m\ov{q_0};{q_1}).
\end{equation*}

We further split the sum in \eqref{eqaftervoronoi} into two subsums according to $(n_1,q_0)=1$ or not, and write 
$$ \mathrm{Main}= \mathrm{Main}_{0}+ \mathrm{Err}_3,$$
where
\begin{multline*}
       \mathrm{Main}_0=\frac{X}{Cq^{1/2}}\sum_{\pm}\sum_\stacksum{c\leq  2C}{(c,q)=1}\sum_\stacksum{n_1|rc}{(n_1,q_0)=1}\sum_{n\geq 1}\frac{\lambda(n,n_1)}{nn_1}\sumstar_{u(cq_0)} \sum_\stacksum{m\in \mathbb{Z}}{u\equiv \ov{q_1}m(c)}\widehat{K_1}(\bar{c}m\ov{q_0};{q_1})\\
       \times \widehat{K_0}(\bar{c}(m-q_1u)\ov{q_1};{q_0})S(r\ov u,\pm n;\frac{rcq_0}{n_1})\mcW_{\pm}(\frac{m}{cq/X},\frac{n_1^2n}{c^3q^3_0r/X}),
\end{multline*}
which corresponds to the sum with $(q_0,n_1)=1$
and 
\begin{multline*}
       \mathrm{Err}_3=\frac{X}{Cq^{1/2}}\sum_{\pm}\sum_\stacksum{c\leq  2C}{(c,q)=1}\sum_{n_1|rc}\sum_{n\geq 1}\frac{\lambda(n,n_1{q_0})}{nn_1{q_0}} \sum_{m\in \mathbb{Z}}\widehat{K_1}(\bar{c}m\ov{q_0};q_1)S(rq_1\ov{m},\pm n;\frac{rc}{n_1})\\
       \times \sumstar_{u(q_0)}\widehat{K_0}(\bar{c}(m-q_1u)\ov{q_1};q_0)\mcW_{\pm}(\frac{m}{cq/X},\frac{n_1^2n}{c^3q_0r/X}),
\end{multline*}
that  corresponds to the complementary sum with $q_0|n_1$ in \eqref{eqaftervoronoi}.
\subsection{The case of $\mathrm{Err}_3$}
Notice that we have
$$\sumstar_{u(q_0)}\widehat{K_0}(\bar{c}(m-q_1u)\ov{q_1};q_0)=-\widehat{K_0}(\bar{c}m\ov{q_1};q_0)\ll \|\widehat{K_0}\|_{\infty}.$$
We can bound $\mathrm{Err}_3$ as follows
\begin{multline}\label{bound-err3}
       \mathrm{Err}_3\ll X^{o(1)}\frac{X\|\widehat{K_0}\|_{\infty}\|\widehat{K_1}\|_{\infty}}{Cq^{1/2}}\sum_{c\leq  2C}\sum_{n_1|rc}\sum_\stacksum{n\geq 1}{n^2_1n\leq C^3{q_0}r/X}\frac{|\lambda(n,n_1{q_0})|}{nn_1{q_0}} \sum_{m\leq ZCq/X}\bigl(rq_1\ov{m}, n;\frac{rc}{n_1}\bigr)^{1/2}\bigl(\frac{rc}{n_1}\bigr)^{1/2}\\
       \ll X^{o(1)}\frac{X\|\widehat{K_0}\|_{\infty}\|\widehat{K_1}\|_{\infty}{q_0}^{\varpi_3}}{Cq^{1/2}{q_0}}C\frac{ZCq}{X}\bigl(rC\bigr)^{1/2}\ll X^{o(1)}\|\widehat{K_0}\|_{\infty}\|\widehat{K_1}\|_{\infty}Zr^{1/2}X^{3/4}{q_0}^{\varpi_3-7/4}q^{1/2}
\end{multline}
upon plugging in $C=(X/q_0)^{1/2}$. Here $\varpi_3=5/14$ is the Kim--Sarnak bound \cite{KimSar}.

\subsection{The case of $\mathrm{Main}_{0}$}
For $\mathrm{Main}_{0}$, since $n_1|rc$, we have 
$$S(r\ov u,\pm n;\frac{rcq_0}{n_1})=S(\ov{c}n_1\ov u,\pm \ov{rc}n_1n;{q_0})
S(\ov{q_0}r\ov u,\pm \ov{q_0}n;{rc}/n_1)$$ so that we can write the sum over $u\bmod c{q_0}$ in $\mathrm{Main}_{0}$ as 
$$
       S(\ov{q_0}{q_1}r\ov{m},\pm \ov{q_0}n;{rc}/n_1)\widehat{K_1}(\bar{c}m\ov{q_0};{q_1})\sumstar_{u(q_0)}\widehat{K_0}(\bar{c}(m-q_1u)\ov{q_1};{q_0})S(\ov{c}n_1\ov u,\pm \ov{rc}n_1n;{q_0}).
$$
Hence we have
\begin{multline}
\mathrm{Main}_{0}=\frac{X{q_0}}{Cq^{1/2}}\sum_{\pm}\sum_\stacksum{c\leq  2C}{(c,q)=1}\sum_\stacksum{n_1|rc}{(n_1,q_0)=1}\sum_{n\geq 1}\frac{\lambda(n,n_1)}{nn_1}\\
\times\sum_{m\in\mathbb{Z}}\widehat{K_1}(\bar{c}m\ov{q_0};{q_1})N_{c,r}(m,n;{q_0})S(\ov{q_0}{q_1}r\ov{m},\pm \ov{q_0}n;{rc}/n_1)\mcW_{\pm}(\frac{m}{cq/X},\frac{n_1^2n}{c^3q^3_0r/X}),\label{n1qcoprime}
\end{multline}
where
\begin{multline}\label{defn-N}
N_{c,r}(m,n;{q_0})=\frac{1}{{q_0}}\sumstar_{u(q_0)} \widehat{K_0}(\bar{c}(m-q_1u)\ov{q_1};{q_0})S(\ov{c}n_1\ov u,\pm \ov{rc}n_1n;{q_0})\\
=\frac{1}{{q_0^{1/2}}}\sumstar_{u(q_0)} \widehat{K_0}(\bar{c}(m-q_1u)\ov{q_1};{q_0}) \Kl_2(\pm \ov{c}^2\ov{r}n^2_1n\ov u;{q_0}).
\end{multline}

We break the $c$-sum in \eqref{n1qcoprime} into $O(\log X)$ many dyadic intervals with $c\sim C'$, where $C'$ satisfies
$$ C'\leq 2C=2(X/{q_0})^{1/2}.$$ 
\begin{notation} To lighten the expressions to come we write 
$$A\lesssim B\hbox{ in place of }A\leq X^{o(1)}B.$$
\end{notation}

By \eqref{truncate-m} and  \eqref{decay-of-W},  we know that the $m$ and $(n_1,n)$ sums in \eqref{n1qcoprime} can be truncated at
\begin{equation}\label{truncation-of-n}
m\lesssim M=Z\frac{{C'}q}{X}, nn^2_1\lesssim \frac{{C'}^3q^{3}_0r}{X}.
\end{equation}

For each fixed $n_1$ we break the $n$-sum in \eqref{n1qcoprime} into $O(\log q)$ dyadic intervals $n\sim N/n^2_1$ with $N$ satisfying $$N\lesssim \frac{{C'}^3q^{3}_0r}{X}.$$

Now for each $c\sim C'$ and $nn^2_1\sim N$ we will evaluate the truncated version of $\mathrm{Main}_{0}$.

\subsubsection{Cauchy--Schwarz}\label{CSsec}
We now factorize $c=c_1c_2$ with $$c_1\leq C',\ {n_1|rc_1},\ c_1|(n_1r)^\infty\hbox{ and }(c_2,n_1r)=1.$$
Then we apply Cauchy--Schwarz inequality and the Rankin--Selberg estimate to bound  the sum $\mathrm{Main}_{0}$ as follows (for the various choices of $\pm$)
\begin{equation}\label{main-after-cauchy}
\mathrm{Main}_{0}\ll  X^{o(1)}\frac{Xq_0}{Cq^{1/2}}\sup_{N\lesssim \frac{{C'}^3q^{3}_0r}{X}}\frac{1}{N^{1/2}}B(N)^{1/2}
\end{equation}
with
\begin{multline*}
B(N):=\sumsum_\stacksum{c_1,nn_1^2\approx N}{(n_1,q_0)=1}n_1\biggl| \sum_{m\leq M}\sum_\stacksum{c_2\sim C'/c_1}{(c_2,q)=1}\widehat{K_1}(\bar{c}m\ov{q_0};{q_1})\\
\times
S(\ov{q_0}{q_1}r\ov{m},\pm \ov{q_0}n;{rc_1c_2}/n_1)N_{c_1c_2,r}(m,n;{q_0})\mcW_{\pm}(\frac{m}{c_1c_2q/X},\frac{n_1^2n}{c_1^3c_2^3q^3_0r/X}) \biggr|^2 U\left(\frac{n}{N/n^2_1}\right).
\end{multline*}
Here $U\in \mcC^\infty_c(\Rr_{>0})$ is a smooth function satisfying $U^{(j)}(x)\ll_j 1$ for $j\geq 0$.

After opening the square, the factor $B(N)$ equals
\begin{gather}\label{eqprepoisson}
B(N)=\sumsum_{c_1,n_1}n_1\sumsum_{m,m'}\sumsum_{c_2,c_2'}\widehat{K_1}(\bar{c}m\ov{q_0};{q_1})\ov{\widehat{K_1}(\ov{c'}m'\ov{q_0};{q_1})}\\
\times\sum_{n\geq 1} S(\ov{q_0}{q_1}r\ov{m},\pm \ov{q_0}n;{rc}/n_1)\ov{S(\ov{q_0}{q_1}r\ov{m'},\pm \ov{q_0}n;{rc'}/n_1)}\nonumber\\
\times N_{c,r}(m,n;{q_0})\ov{N_{c',r}(m',n;{q_0})}\mcW\left(\frac{n}{N/n^2_1}\right),	\nonumber
\end{gather}
 where 
 \begin{equation}
     \label{cc'def}
     c=c_1c_2, c'=c_1c'_2
 \end{equation}
 and
 \begin{equation*}\label{weight-before-poisson}
 \mcW\left(\frac{n}{N/n^2_1}\right)=U\left(\frac{n}{N/n^2_1}\right)\mcW_{\pm}(\frac{m}{cq/X},\frac{n_1^2n}{c^3q^3_0r/X})\ov{\mcW_{\pm}}(\frac{m'}{{c'}q/X},\frac{n_1^2n}{{c'}^3q^3_0r/X}).
 \end{equation*}

We apply Poisson formula to the $n$-variable keeping in mind that 
\begin{equation*}\label{eq-MMNN} 
n\mapsto S(\ov{q_0}{q_1}r\ov{m},\pm \ov{q_0}n;{rc}/n_1)\ov{S(\ov{q_0}{q_1}r\ov{m'},\pm \ov{q_0}n;{rc'}/n_1)}N_{c,r}(m,n;{q_0})\ov{N_{c',r}(m',n;{q_0})}
\end{equation*}
 is periodic of period $q_0k$ with $k=rc_1c_2c_2'/n_1$, and we see that \eqref{eqprepoisson} becomes
\begin{multline}\label{eqpostpoisson}
B(N)=\sumsum_{c_1,n_1}n_1\sumsum_{m,m'}\sumsum_{c_2,c_2'}\widehat{K_1}(\bar{c}m\ov{q_0};{q_1})\ov{\widehat{K_1}(\ov{c'}m'\ov{q_0};{q_1})}\\
\times\frac{N}{n^2_1 {q_0}k}\sum_{n\in\Zz}\mathrm{FT}_1(n;q_0)\mathrm{FT}_2(n;k)\what \mcW(n/N^*),
\end{multline}
 where (after inserting \eqref{defn-N})
\begin{multline}\label{eq-qsum2}
\mathrm{FT}_1(n;q_0)=\frac{1}{{q_0}}\sumsum_{u,u'\bmod  {q_0}}
\widehat{K_0}(\bar{c}(m-q_1u)\ov{q_1};{q_0})\ov{\widehat{K_0}(\ov{c'}(m'-q_1u')\ov{q_1};{q_0})} \\
\times\sum_{v\bmod  {q_0}}\Kl_2(\pm \ov{c}^2\ov{r}n^2_1v\ov u;{q_0})
\ov{\Kl_2(\pm \ov{c'}^2\ov{r}n^2_1v\ov{u'};{q_0})} \, e\left(\frac{nv\bar{k}}{q_0}\right)
\end{multline}
and with 
\begin{equation}
    \label{kdef}k=rc_1c_2c'_2/n_1
\end{equation}and
$$\FT_2(n;k)=\sum_{v(k)}S(\ov{q_0}{q_1}r\ov{m},\pm \ov{q_0}v;{rc_1c_2}/n_1)\ov{S(\ov{q_0}{q_1}r\ov{m'},\pm \ov{q_0}v;{rc_1c'_2}/n_1)}\, e\left(\frac{ nv \ov{q_0}}k\right)$$
and
     \begin{equation}\label{defN*}
       N^*:=q_0kn_1^2/ N.
       \end{equation}
    The Fourier transform $\what \mcW(y)$ of $\mcW$ is of rapid decay when $|y|\gg q^{\varepsilon}$. Hence we can truncate the dual $n$-sum in \eqref{eqpostpoisson} at $|n|\ll q^{\varepsilon} N^*$. For $|n|\ll q^{\varepsilon} N^*$, from the estimate \eqref{Jacquet-Shalika}, we readily have the bound 
    \begin{equation}\label{scalar-of-W}
    \what \mcW(n/N^*)\ll  \frac{N}{{C'}^3q^{3}_0r/X}.
    \end{equation}

\subsubsection{Computation of $\FT_2(n;k)$}\label{computation-fourier-transform}
The following bounds can be proved.
\begin{lemma}\label{bound-k-part}
 We have the following estimates 
\begin{equation*}
\FT_2(0;k)\ll \delta_{c_2=c'_2}\frac{r{c_1{c_2}{c'_2}}}{n_1}\sum_{d|(rc_1c_2/n_1,r(m-m'))}d
\end{equation*}
and for $n\not\equiv 0\mods{k}$
\begin{equation*}
\begin{split}
\FT_2(n;k)\ll (\frac{r{c_1}}{n_1})^2 c_2c'_2.
\end{split}\end{equation*}
\end{lemma}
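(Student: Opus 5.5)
This is a complete-exponential-sum computation of the kind in \cite{KLMS} and \cite{LMS}*{\S 5}. I would open both Kloosterman sums, execute the $v$-sum by orthogonality, and then bound what is left either trivially or via Weil. Put $a=\ov{q_0}q_1r$, $b=\pm\ov{q_0}$, $\ell=rc_1c_2/n_1$, $\ell'=rc_1c_2'/n_1$. Then $k=\mathrm{lcm}$-type multiple of $\ell,\ell'$, namely $k=\ell c_2'=\ell' c_2$; since $(c_2,c_2')$ need not be $1$, I will write $k$ as a common multiple and reduce periodicity. Expanding,
\begin{equation*}
\FT_2(n;k)=\sumstar_{x(\ell)}\sumstar_{y(\ell')}e\Big(\frac{a\ov m x}{\ell}-\frac{a\ov{m'}y}{\ell'}\Big)\sum_{v(k)}e\Big(\frac{b v\ov x}{\ell}-\frac{bv\ov y}{\ell'}+\frac{nv\ov{q_0}}{k}\Big).
\end{equation*}
The inner sum equals $k$ times the indicator of the congruence $b\ov x\,(k/\ell)-b\ov y\,(k/\ell')+n\ov{q_0}\equiv 0 \pmod k$, that is, $b c_2'\ov x-bc_2\ov y+n\ov{q_0}\equiv0\pmod k$.

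\emph{Case $n\equiv 0\pmod k$.} The condition reads $c_2'\ov x\equiv c_2\ov y\pmod k$. Reducing modulo $c_2$ and modulo $c_2'$, coprimality of $x$ with $\ell$ and of $y$ with $\ell'$ forces $c_2'\equiv 0\pmod{c_2}$ and conversely. Since $(c_2c_2',n_1r)=1$ and $c_2,c_2'$ are coprime to the other factors, this gives $c_2=c_2'$. Then $\ell=\ell'$ and $x\equiv y\pmod\ell$, so
\begin{equation*}
\FT_2(0;k)=\delta_{c_2=c_2'}\,k\sumstar_{x(\ell)}e\Big(\frac{a(\ov m-\ov{m'})x}{\ell}\Big),
\end{equation*}
which is $k$ times a Ramanujan sum. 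The Ramanujan sum is bounded by $\sum_{d\mid(\ell,\,r(m-m'))}d$ once the unit factors $q_1\ov{q_0}\ov{mm'}$ are absorbed (here $(m m',\ell)$ is implicit in the Kloosterman sums). This gives the first bound.

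\emph{Case $n\not\equiv 0\pmod k$.} For fixed $x$, the congruence determines $\ov y$ modulo $\ell'$ up to the ambiguity coming from $\gcd$'s, so trivially there are at most $\ell\cdot\gcd$-many pairs $(x,y)$. The crude count already gives $|\FT_2|\le k\cdot\ell\cdot(\text{multiplicity})$. To reach $(rc_1/n_1)^2c_2c_2'=\ell\ell'$ I instead factor along the CRT decomposition $k=(rc_1/n_1)\cdot c_2c_2'$ modulo the coprime pieces. On the $c_2$ and $c_2'$ parts, when $(c_2,c_2')=1$, $\FT_2$ factors into a product of sums of the shape $\sum_v S(\alpha,\beta v;c_2)e(\gamma v/c_2)$; such a sum equals $c_2$ times a single additive character, so it has size $c_2$, and similarly size $c_2'$. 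On the common part modulo $rc_1/n_1$ (and on any common prime powers of $c_2,c_2'$), I would bound trivially via the congruence count: at most $\ell_0$ values of $x$, each determining $y$, times $k_0$ from orthogonality, giving at most $(rc_1/n_1)^2$ there.

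The main obstacle is the bookkeeping in this trivial count when $n$ shares factors with $k$ and the congruence degenerates. I would handle it prime by prime, multiplicatively, bounding each local factor by $p^{2\alpha}$ (length squared) in the degenerate case, and by $p^{\alpha}$ times the length of the non-shared modulus in the generic case.
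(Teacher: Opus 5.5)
For $n\equiv 0\pmod k$ your argument is correct and is the paper's: $c_2\mid c_2'$ and $c_2'\mid c_2$ force $c_2=c_2'$, and what remains is $k$ times a Ramanujan sum. For $n\not\equiv0\pmod k$ with $(c_2,c_2')=1$, your argument is also the paper's. After orthogonality in $v$ you split $k=(rc_1/n_1)\cdot c_2c_2'$ by CRT. The congruence then pins down the $c_2$- and $c_2'$-components uniquely, and the part modulo $rc_1/n_1$ is bounded trivially by $\varphi(rc_1/n_1)$ pairs. This gives $k\,\varphi(rc_1/n_1)\le (rc_1/n_1)^2c_2c_2'$.

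The step that fails is your last one, at primes dividing both $c_2$ and $c_2'$. Suppose $p^a\| c_2$ and $p^b\| c_2'$. The congruence count gives at best a local factor $p^{a+b}\cdot p^{\min(a,b)}$, but reaching $(rc_1/n_1)^2c_2c_2'$ requires $p^{a+b}$, which is your ``$p^{2\alpha}$'' when $a=b=\alpha$. No bookkeeping can close this, because the second inequality of the lemma is false there.

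Take $r=c_1=n_1=1$ and $c_2=c_2'=p$, so $k=p^2$, and $n=pn'$ with $p\nmid n'$. Put $A=\ov{q_0}q_1\ov m$, $B=\ov{q_0}q_1\ov{m'}$, $t=\pm n'$. Your orthogonality step, followed by the substitution $w=1+t\ov s$, gives
\begin{equation*}
\FT_2(pn';p^2)=p^2\sum_{\substack{s\in\Ff_p^\times\\ s\neq -t}}e\Big(\frac{A\ov s-B\,\ov{(s+t)}}{p}\Big)=p^2\Big(e\Big(-\frac{(A+B)\ov t}{p}\Big)\,S(A\ov t,B\ov t;p)-1\Big).
\end{equation*}
We have $\sum_{h\in\Ff_p^\times}|S(1,h;p)|^2=p^2-p-1$ and $|S(1,h;p)|\le 2\sqrt p$. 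Hence this has size $\asymp p^{5/2}$ for a positive proportion of $(m,m',n')$, while the lemma asserts $O(p^2)$.

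The paper's proof has the same blind spot. It bounds its $(x_1,x_2)$-sum modulo $c_2c_2'$ as if it had a single term, which holds only when $(c_2,c_2')=1$; in general it has up to $(c_2,c_2')$ terms.

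What your count does prove is obtained by reducing the congruence modulo $c_2$. This forces $(c_2,c_2')\mid n$ and leaves at most $(c_2,c_2')$ classes for $x\bmod c_2$, after which $\ov y\bmod \ell'$ is determined. The result is
\begin{equation*}
\FT_2(n;k)\ll \Big(\frac{rc_1}{n_1}\Big)^2c_2c_2'\,(c_2,c_2'),\qquad \FT_2(n;k)=0\ \text{ unless }(c_2,c_2')\mid n .
\end{equation*}
This is all that is used later. In the non-zero frequency analysis, the condition $(c_2,c_2')\mid n$ shortens the dual $n$-sum by the same factor. More crudely, $\sum_{c_2,c_2'\sim Y}(c_2,c_2')\ll Y^2\log Y$. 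Either way the final bounds are unchanged up to $X^{o(1)}$.
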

\proof Inserting the definition of the Kloosterman sum, we can write
\begin{equation*}
\FT_2(n;k)=\frac{r{c_1{c_2}{c'_2}}}{n_1}\sumstar_{y_1(rc_1c_2/{n_1})}e\left(\frac{\ov{y_1}\ov{q_0}{q_1}r\ov{m}}{rc_1c_2/{n_1}}\right)\sumstar_\stacksum{y_2(rc_1c'_2/{n_1})}{\pm(y_1c'_2-y_2c_2)+n\equiv 0(rc_1c_2c'_2/n_1)}e\left(\frac{-\ov{y_2}\ov{q_0}{q_1}r\ov{m'}}{rc_1c'_2/{n_1}}\right).
\end{equation*}

We consider two cases.
\par Case 1. If $n\equiv 0 \mods{k}$, then the inner sum vanishes unless $c_2=c'_2$ and $y_1\equiv y_2 \bmod rc_1c_2/n_1$ in which case we obtain that
\begin{equation*}
\begin{split}
\FT_2(0;k)=&\delta_{c_2=c'_2}\frac{r{c_1{c_2}{c'_2}}}{n_1}\sumstar_{y_1(rc_1c_2/{n_1})}e\left(\frac{\ov{y_1}(\ov{m}-\ov{m'})\ov{q_0}{q_1}r}{rc_1c_2/{n_1}}\right)\\
=&\delta_{c_2=c'_2}\frac{r{c_1{c_2}{c'_2}}}{n_1}\sum_{d|(rc_1c_2/n_1,r(m-m'))}d\mu\left(\frac{rc_1c_2/n_1}{d}\right).
\end{split}\end{equation*}

\par Case 2. If $n\neq 0 \mods{k}$, then we can further write
\begin{multline*}
\FT_2(n;k)=\frac{r{c_1}{c_2}{c'_2}}{n_1}\sumstar_{y_1(rc_1/{n_1})}e\left(\frac{\ov{y_1}\ov{q_0}{q_1}r\ov{m}\ov{c_2}}{rc_1/{n_1}}\right)\sumstar_\stacksum{y_2(rc_1/{n_1})}{\pm(y_1c'_2-y_2c_2)+n\equiv 0(rc_1/n_1)}e\left(\frac{-\ov{y_2}\ov{q_0}{q_1}r\ov{m'}\ov{c'_2}}{rc_1/{n_1}}\right)\\
\times \sumstar_{x_1(c_2)}e\left(\frac{\ov{x_1}\ov{q_0}{q_1}\ov{m}\ov{c_1}n_1}{c_2}\right)\sumstar_\stacksum{x_2(c'_2)}{\pm(x_1c'_2-x_2c_2)+n\equiv 0(c_2c'_2)}e\left(\frac{-\ov{x_2}\ov{q_0}{q_1}\ov{m'}\ov{c_1}n_1}{c'_2}\right)\\
\ll \frac{r{c_1}{c_2}{c'_2}}{n_1}\varphi(\frac{r{c_1}}{n_1})
\end{multline*}
with $\varphi$ being Euler's totient function.
\qed

\subsubsection{Computation of $\FT_1(n;{q_0})$}    \label{secq-sum}

Using  calculations very similar to \cite{LMS}*{\S 4.2.2} (in the notation of \cite{LMS}*{(4.21)-(4.22)}, $\widehat{L}(x)=K_0(xc;q_0)e(m\ov{q_1}x/q_0)$), we can express $\mathrm{FT}_1(n;q_0)$ which is given
in \eqref{eq-qsum2} as $$
\mathrm{FT}_1(n;q_0)=
\sum_{v\bmod  {q_0}}Z(v)\ov{Z'(v-\delta)},$$
where
\begin{equation*}
Z(v)=Z_{\alpha,\beta}(v):=\frac{1}{\sqrt{q_0}}\sum_{x\in\Fqqt}K_0(xv;{q_0})e\left(\frac{\alpha xv}{{q_0}}\right)\Kl_2(\beta x;{q_0}),	
\end{equation*}
and $Z'(v)$ is defined likewise with the parameter $(\alpha,\beta)$ being replaced by $(\alpha',\beta')$ and $\alpha,\alpha',\beta,\beta'\in\Ff_{q_0}$ are given by
\begin{gather}\alpha= \ov{c} m\ov{q_1},\  \alpha'=\ov{c'} m'\ov{q_1},\ \beta=\eps \ov{c}^3\ov{r} n_1^2,\, \beta'=\eps\ov{c'}^3\ov{r} n_1^2,\ \delta=\ov k n,\label{eq-actual}
\end{gather}
with $\eps=\pm 1$.

The following result is analogous to \cite{LMS}*{Prop 4.5}.

Write $q:=q_0$; let $T_\mcF(\Fq)$ be the subgroup of $\Fqt$ defined by
$$ T_\mcF(\Fq)=\{\lambda\in\Fqt,\ [\times\lambda]^*\mcF\hbox{ is geometrically isomorphic to }\mcF\}
$$
and let $\mathrm{Aff}_\mcF(\Fq)$ be the subgroup of affine linear transformations of $\Fq$ defined by
$$ \mathrm{Aff}_\mcF(\Fq)=\{\gamma:y\mapsto ay+b,\ [\gamma]^*\mcF\hbox{ is geometrically isomorphic to }\mcF\}.
$$

\begin{proposition} \label{sqrootcancel1}
 Assuming that the sheaf $\mcF$ is good in the sense of Definition \ref{gooddef} and that $q$ is sufficiently large depending on the complexity $C(\mcF)$. For any $\alpha,\beta,\alpha',\beta',\delta\in\Fqt$, we have
\begin{equation}
	\label{ZZmomentdelta1}
	\sum_{v}Z(v)\ov{Z'(v-\delta)}=O(q^{1/2}).
\end{equation}
If $\delta=0$, \eqref{ZZmomentdelta1} holds unless one of the following holds
\begin{itemize}
	\item $\alpha/\alpha'=\beta/\beta'$ and
	$$\gamma:=\alpha/\alpha'=\beta/\beta'\in T_\mcF(\Fq),$$
	\item $\alpha/\alpha'\not=\beta/\beta'$, $\beta/\beta'\not=1$ and
	$$\gamma: y\mapsto \frac{\beta}{\beta'}y+\alpha'(\frac{\alpha}{\alpha'}-\frac{\beta}{\beta'})\in \mathrm{Aff}_\mcF(\Fq).$$
\end{itemize}
In these two cases, we have
\begin{equation}
	\label{ZZmoment1}
	\sum_{v}Z(v)\ov{Z'(v)}=c_\mcF(\gamma)q+O(q^{1/2})
\end{equation}
for $c_\mcF(\gamma)$ some complex number of modulus $1$. In these estimates, the implicit constants depend only on $C(\mcF)$.
\end{proposition}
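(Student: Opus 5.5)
We interpret $v\mapsto Z(v)$ as the trace function of an $\ell$-adic sheaf on $\Gm$ and reduce \eqref{ZZmomentdelta1} to the Riemann Hypothesis (Deligne) via the Grothendieck--Lefschetz trace formula, as in \cite{LMS}*{Prop.\ 4.5}.

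\textbf{The sheaf.} Write $Z_{\alpha,\beta}(v)=q^{-1/2}\sum_x K_0(xv)e(\alpha xv/q)\Kl_2(\beta x)$. Substituting $y=xv$ gives
$$Z(v)=q^{-1/2}\sum_{y}L_\alpha(y)\Kl_2(\beta y\ov v),\qquad L_\alpha(y)=K_0(y)e(\alpha y/q).$$
So $Z$ is, up to normalisation, the multiplicative convolution of the trace function of $\mcF\otimes\mcL_{\psi(\alpha y)}$ with the pull-back of $\KL_2$ under $y\mapsto\beta/y$. Equivalently, by Fourier inversion it is a Mellin-type transform. Hence $Z=t_{\mcG_{\alpha,\beta}}$ on $\Gm$, where
$$\mcG_{\alpha,\beta}=R^1\pi_!\bigl(\mcF\otimes\mcL_{\psi(\alpha y)}\otimes[(y,v)\mapsto\beta y/v]^*\KL_2\bigr)(1/2).$$

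\textbf{Properties of $\mcG_{\alpha,\beta}$.} The goodness of $\mcF$ is used here. Since $\mcF$ has no slope-$1$ part at $\infty$, $\mcF\otimes\mcL_{\psi(\alpha y)}$ with $\alpha\neq0$ is geometrically irreducible and not geometrically isomorphic to any $\mcL_\chi\otimes\mcL_{\psi(\cdot)}$-type sheaf. Its slope at $\infty$ is exactly $1$ on a nonzero part, so it is Fourier. Then the convolution with $\KL_2$ is a hypergeometric-type operation (Katz's convolution formalism). The resulting $\mcG_{\alpha,\beta}$ is a middle-extension sheaf, pure of weight $0$, geometrically irreducible, with complexity bounded in terms of $C(\mcF)$. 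It is not geometrically isomorphic to a Kummer sheaf, so it is not geometrically trivial.

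\textbf{Correlation and diagonal cases.} Deligne's theorem gives
$$\sum_vZ(v)\ov{Z'(v-\delta)}=q\,\dim\mathrm{Hom}_{\mathrm{geom}}\bigl(\mcG_{\alpha',\beta'}(\cdot-\delta),\mcG_{\alpha,\beta}\bigr)+O(q^{1/2}).$$
So it suffices to decide when the two sheaves are geometrically isomorphic.

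For $\delta\neq0$ we compare ramification. $\mcG_{\alpha,\beta}$ is lisse on $\Gm$, with singularities only at $0$ and $\infty$, because $\Kl_2(\beta y/v)$ degenerates only there. A $\delta$-translate is ramified at $\delta\neq0$ while the other sheaf is not, unless both are lisse on $\Aa^1$. We rule that out by showing that $\mcG$ has nontrivial local monodromy at $0$: it has a $\KL_2$-type wild part of slope $1/2$ coming from $v\to0$. Hence no isomorphism exists, and the bound is $O(q^{1/2})$.

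For $\delta=0$ we recover $\mcF$ from $\mcG_{\alpha,\beta}$ by inverting the convolution, i.e. convolving with the dual Kloosterman sheaf. This shows that $\mcG_{\alpha,\beta}\simeq\mcG_{\alpha',\beta'}$ forces
$$[y\mapsto (\beta/\beta')y]^*\bigl(\mcF\otimes\mcL_{\psi(\alpha y)}\bigr)\simeq\mcF\otimes\mcL_{\psi(\alpha' y)}.$$
Equivalently, this is the stated condition on $\gamma$.
- If $\alpha/\alpha'=\beta/\beta'$, the additive twists cancel and we get $\gamma\in T_\mcF$.
- Otherwise a residual $\mcL_\psi$ remains. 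Using again that $\mcF$ has no slope-$1$ piece, this becomes, after Fourier transform, the affine condition $\gamma\in\mathrm{Aff}_\mcF$. When $\beta=\beta'$ it would force $\mcF\simeq\mcF\otimes\mcL_\psi$, which is impossible, hence the extra hypothesis $\beta/\beta'\neq1$.

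In these cases $\mathrm{Hom}$ is one-dimensional, because $\mcG$ is geometrically irreducible. The main term is then $c_\mcF(\gamma)q$, where $|c_\mcF(\gamma)|=1$ is the Frobenius eigenvalue on that line, since both sheaves are pure of weight $0$.

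\textbf{Main obstacle.} The hardest step is the geometric analysis: proving irreducibility of $\mcG$ and the precise "iff" for isomorphism. In particular, one must handle the mixing of the additive character with $\mcF$ in the affine case, tracking local monodromy at $\infty$ through Laumon's local Fourier transform. This is exactly where the slope-$1$ exclusion enters, and I would follow \cite{LMS}*{\S4} closely there.
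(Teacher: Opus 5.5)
The $\delta\neq0$ case has a genuine gap. Your strategy (show $\mcZ'\not\simeq[+\delta]^*\mcZ$ by comparing ramification, with $\mcZ$ your $\mathcal{G}_{\alpha,\beta}$) is the paper's. However, the lisseness of $\mcZ$ on $\Gm$ does not follow from ``$\Kl_2(\beta y/v)$ degenerates only at $v=0,\infty$''. Deligne's semicontinuity theorem requires the Swan conductor at $y=\infty$ of $\mcF_\alpha\otimes[y\mapsto\beta y/v]^*\KL_2$ to be independent of $v$, and it can drop at isolated $v\in\Gm$. Concretely, take $\mcF=\mcL_{\psi(-\alpha y)}\otimes[\times a]^*\KL_2$, which has slope $1$ at $\infty$ and hence is not good. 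Orthogonality of Kloosterman sums gives $Z(v)=q^{1/2}\delta_{v=\beta/a}+O(q^{-1/2})$, and with $\alpha'=\alpha$, $\delta=(\beta-\beta')/a\neq0$, the correlation equals $q+O(1)$. Your $\delta\neq0$ argument never uses goodness and applies verbatim to this example, so it would prove a false statement. The same example shows that ``$\mcF_\alpha$ is Fourier'' is not what makes $\mcZ$ an irreducible middle extension: here $\mcF_\alpha=[\times a]^*\KL_2$ is Fourier, yet $Z$ is concentrated at one point. The missing step is Lemma \ref{he-lisse}. Every indecomposable summand $V$ of $\mcF_\infty$ has slope $s\neq1$, so $V\otimes\mcL_\alpha$ has slope $\max(s,1)\geq1>1/2$. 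Hence $V\otimes\mcL_\alpha\otimes[y\mapsto\beta y/v]^*\KL_2$ has slope $\max(s,1)$ for every $v$, the Swan conductor is constant, and $\mcZ$ is lisse on $\Gm$.

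Likewise, the nontrivial monodromy of $\mcZ$ at $0$ is not a ``$\KL_2$-type slope-$1/2$ part''. For the good sheaf $\mcF=\mcL_\chi$, a Gauss sum computation gives $Z(v)\propto\sum_w\chi(w)\ov\chi(w+\beta)e_q(w/(\alpha v))$. This is the Fourier transform of a tame sheaf ramified at $0$ and $-\beta$, so by Laumon's local Fourier transform the monodromy of $\mcZ$ at $0$ has slopes $0$ and $1$ only. Nontriviality at $0$ is true, but it needs the Euler-characteristic argument of \cite{LMS}*{Lemma 8.7}. The paper adapts this in Lemma \ref{he-nontrivial}, using that all slopes of $\mcF_\alpha$ at $\infty$ are $\geq1$, which again comes from goodness. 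So the slope-$1$ exclusion is needed precisely for $\delta\neq0$, not in the $\delta=0$ affine case where you place it.

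For $\delta=0$ your route is genuinely different but viable. You invert middle convolution by $\KL_2$, which presupposes irreducibility of $\mcZ$ and Katz's formalism. The paper instead uses $\frac1q\sum_{u\in\Fqt}\Kl_2(\beta xu)\ov{\Kl_2(\beta'x'u)}=\delta_{\beta x=\beta'x'}+O(1/q)$ to get $\sum_vZ(v)\ov{Z'(v)}=\sum_xK(x)\ov{K(\frac{\beta}{\beta'}x)}e_q\bigl((\alpha-\frac{\alpha'\beta}{\beta'})x\bigr)+O(1)$ directly. This is the function-level shadow of the same invertibility and needs only that $\mcF$ is Fourier. Note that your dilation is inverted: the condition is $\mcF_\alpha\simeq[\times\beta/\beta']^*\mcF_{\alpha'}$. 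With your displayed isomorphism, the additive twists cancel when $\alpha/\alpha'=\beta'/\beta$, not $\beta/\beta'$.
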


%\begin{remark}
%    It follows from the first %condition of Definition \ref{gooddef} %that $|\mathrm{Aff}_\mcF(\Fq)|$ is %bounded only in terms of the %complexity $C(\mcF)$.
%\end{remark}

Returning to our original sum, applying Proposition \ref{sqrootcancel1} (with $q=q_0$) and using that \eqref{eq-actual}, we see that the $\FT_1(n;{q_0})$ in \eqref{eq-qsum2} is  $O\big({q_0^{1/2}}\big)$ unless $\delta=0$ and either
\begin{equation}\label{delta=0}\gamma:={c'}m/cm'={c'}^3/c^3\in T_\mcF({\mathbf{F}_{q_0}})
\end{equation}
or 
$$\gamma: y\mapsto \frac{{c'}^3}{c^3}y+\ov{c'}m'\ov{q_1}(\frac{c'm}{cm'}-\frac{{c'}^3}{c^3})\in \mathrm{Aff}_\mcF({\mathbf{F}_{q_0}})$$
in which case 
$\FT_1(n;{q_0})$  equals $c_\mcF(\gamma){q_0}+O\big({q_0^{1/2}}\big)$ with $|c_\mcF(\gamma)|=1$.

\subsection{Contribution of the $n=0$ frequency}\label{contribution-of-zero}

In this section we bound the contribution to $B(N)$ in \eqref{eqpostpoisson}  from the frequency $n=0$ , given by
\begin{multline*}
B_{n=0}(N)=\sumsum_{c_1,n_1}\sumsum_{m,m'}\sumsum_{c_2,c_2'}\widehat{K_1}(\bar{c}m\ov{q_0};{q_1})\ov{\widehat{K_1}(\ov{c'}m'\ov{q_0};{q_1})}\\
\times\frac{N}{ {q_0}rc_1c_2c_2'}\mathrm{FT}_1(0;q_0)\mathrm{FT}_2(0;rc_1c_2c_2'/n_1)\what \mcW(0).
\end{multline*}

 By Lemma \ref{bound-k-part}, when $n=0$ we know that
\begin{equation}\label{cequal}
c_2=c_2',\ c=c',\ k=rc_1c_2^2/n_1.	
\end{equation}

We use the case $\delta=0$ of Proposition \ref{sqrootcancel1}: by \eqref{cequal} and \eqref{delta=0} we have that $\mathrm{FT}_1(0;q_0)$ is  $O({q_0^{1/2}})$ unless we have the congruence modulo $q_0$:
$$m\equiv m'\mods{q_0}$$ in which case 
$\mathrm{FT}_1(0;q_0)$ equals $c{q_0}+O({q_0^{1/2}})$ for some complex number $c$ of modulus $1$.

Similar to the calculations presented in \cite{LMS}*{\S 5}, upon plugging in Proposition \ref{sqrootcancel1} and Lemma \ref{bound-k-part}, we see that $B_{n=0}(N)$ is bounded by 
\begin{multline*}
\|\widehat{K_1}\|^2_{\infty}\sumsum_{c_1,n_1}\sumsum_{m,m'}\sum_{c_2}\frac{N}{ {q_0}rc_1c^2_2}\bigl(q_0\delta_{m\equiv m'\bmod {q_0}}+{q_0^{1/2}}\bigr) \frac{r{c_1{c^2_2}}}{n_1}\sum_{d|(rc_1c_2/n_1,r(m-m'))}d\\
\ll X^{o(1)}\|\widehat{K_1}\|^2_{\infty}
rN\bigl({C'}^2M+\frac{1}{{q_0^{1/2}}}{C'}M^2\bigr).
\end{multline*}
Taking the square root of this term and multiplying it by $\frac{Xq_0}{Cq^{1/2}}\frac{1}{N^{1/2}}$, where $N$ runs over all $$N\ll \frac{{C'}^3q^{3}_0r}{X},$$ we see that the contribution of these terms to \eqref{main-after-cauchy} and therefore to \eqref{n1qcoprime} is bounded by
\begin{multline}\label{n=0bound}
X^{o(1)}\|\widehat{K_1}\|_{\infty}\frac{X{q_0}}{Cq^{1/2}}r^{1/2}\bigl({C'}M^{1/2}+{C'}^{1/2}M/{q_0}^{1/4}\bigr)\\
\ll X^{o(1)}\|\widehat{K_1}\|_{\infty} r^{1/2}\bigl(Z^{1/2}C^{1/2}X^{1/2}{q_0}+ZC^{1/2}q^{1/2}{q_0}^{3/4}\bigr)\\
\ll X^{o(1)}\|\widehat{K_1}\|_{\infty}r^{1/2}\bigl(Z^{1/2} X^{3/4}{q_0}^{3/4}+ZX^{1/4}q^{1/2}{q_0^{1/2}}\bigr).
\end{multline}

\subsection{Contribution from the $n\not=0$ frequencies}  \label{secn=0}

Recall from \eqref{eqpostpoisson} the contribution from the non-zero frequencies $n\neq 0$ to $B(N)$ is given by
\begin{multline*}
B_{n\neq 0}(N)=\frac{N}{{q_0}r}\sumsum_{c_1,n_1}\frac1{c_1}\sum_{m\leq M}\sum_{m'\leq M}\sum_{c_2\sim C'/c_1}\sum_{c_2'\sim C'/c_1}\frac1{c_2c'_2}\widehat{K_1}(\ov{c_1c_2}m\ov{q_0};{q_1})\ov{\widehat{K_1}(\ov{c_1c'_2}m'\ov{q_0};{q_1})}\\
\times\sum_{n\not=0}\mathrm{FT}_1(n;{q_0})\mathrm{FT}_2(n;k)\what \mcW(n/N^*),
\end{multline*}
where $k=rc_1c_2c'_2/n_1$ and $N^*=q_0kn_1^2/ N$.

We distinguish two cases: 
\vskip 0.5cm

-- If $n\equiv0\bmod  {q_0}$ we use again Proposition \ref{sqrootcancel1}  for $\delta=0$ and using  Lemma \ref{bound-k-part} and \eqref{scalar-of-W}, we obtain the following bounds 
$$
B_{{q_0}|n,n\not=0}(N)\ll X^{o(1)}\|\widehat{K_1}\|^2_{\infty}r^{2}{C'}^4{q_0^{1/2}}M^2\frac1{q_0^{1/2}}\cdot  \frac{N}{{C'}^3q^{3}_0r/X}.
$$

-- If $n\not\equiv 0\bmod  {q_0}$ we apply \eqref{ZZmomentdelta1} of Proposition \ref{sqrootcancel1}  for $\delta\not=0$ and obtain 
$$B_{n\not\equiv 0 \bmod  {q_0}}(N)\ll X^{o(1)}\|\widehat{K_1}\|^2_{\infty}r^{2}{C'}^4{q_0^{1/2}}M^2\cdot    \frac{N}{{C'}^3q^{3}_0r/X}.$$

 The reader is referred to \cite{LMS}*{\S 6.1--\S 6.2} for very similar calculations.

The non-zero frequencies contribution to \eqref{main-after-cauchy} and hence to \eqref{n1qcoprime} is bounded by 
\begin{gather}\nonumber
X^{o(1)} \frac{Xq_0}{Cq^{1/2}} \sup_{N\lesssim \frac{{C'}^3q^{3}_0r}{X}}\frac{1}{N^{1/2}} \left(B_{n\not\equiv 0 \bmod  {q_0}}(N)+B_{{q_0}|n,n\not=0}(N)\right)^{1/2}\\
\ll X^{o(1)} \frac{Xq_0}{Cq^{1/2}} \sup_{N\lesssim \frac{{C'}^3q^{3}_0r}{X}}\frac{1}{N^{1/2}}   \left(\|\widehat{K_1}\|^2_{\infty}r^{2}{C'}^4{q_0^{1/2}}M^2    \frac{N}{{C'}^3q^{3}_0r/X}\right)^{1/2}\nonumber\\
\ll X^{o(1)} Z\|\widehat{K_1}\|_{\infty}r^{1/2}\frac{X^{1/2}C^{1/2}q^{1/2}}{q_0^{1/4}}\ll X^{o(1)} Z\|\widehat{K_1}\|_{\infty}r^{1/2}\frac{X^{3/4}q^{1/2}}{q_0^{1/2}},\label{nonzerocomb}
\end{gather}
since by \eqref{truncation-of-n} $M=Z\frac{{C'}q}{X}$ and $C'\leq 2C=2(X/q_0)^{1/2}$.

\subsection{Bounding $\mcC_{V,r}(X;K)$: the final steps}\label{q-divide-n1}
Let us recall that the sum $\mathrm{Main}$ in \eqref{eqaftervoronoi} was split into two subsums $\mathrm{Main}_0$ and $\mathrm{Err}_3$ depending on whether $(n_1,q_0)=1$ or not. 

By \eqref{n=0bound} and \eqref{nonzerocomb} the sum $\mathrm{Main}_0$ in \eqref{n1qcoprime} is bounded by
\begin{equation}\label{n1qcoprimefinal}
\begin{split}
\ll& X^{o(1)} \|\widehat{K_1}\|_{\infty}r^{1/2}\Big(Z^{1/2}X^{3/4}{q_0}^{3/4}+ZX^{1/4}q^{1/2}{q_0^{1/2}}+Z\frac{X^{3/4}q^{1/2}}{q_0^{1/2}}\Big).	\end{split}
\end{equation}
According to \eqref{bound-err3},  the complement sum $\mathrm{Err}_3$ is bounded by \begin{equation}\label{qdivn1final}
\ll X^{o(1)}\|\widehat{K_0}\|_{\infty}\|\widehat{K_1}\|_{\infty}Zr^{1/2}X^{3/4}{q_0}^{\varpi_3-7/4}q^{1/2},
\end{equation}
where $\varpi_3=5/14$ is the Kim--Sarnak bound \cite{KimSar}. This bound is absorbed into the last factor in \eqref{n1qcoprimefinal} (by simply applying the trivial bound $\|\widehat{K_0}\|_{\infty}\ll {q_0^{1/2}}$).

Combining the bound \eqref{qdivn1final} with \eqref{n1qcoprimefinal} we see that the sum $\mathrm{Main}$ in \eqref{S'sum} and hence the sum $\mcC_{V,r}(X;K)$ in \eqref{Sdef} is bounded as follows
\begin{equation*}
\mcC_{V,r}(X;K)\ll X^{o(1)} Z\|\widehat{K_1}\|_{\infty}r^{1/2}\Big(X^{3/4}{q_0}^{3/4}+X^{1/4}q^{1/2}{q_0^{1/2}}+\frac{X^{3/4}q^{1/2}}{q_0^{1/2}}\Big). 	
\end{equation*}
This completes the proof of Theorem \ref{mainthm2}.

\section{Square-root cancellation for certain exponential sums}
\label{secsqroot}

In this section we establish Proposition \ref{sqrootcancel1} whose statement and notation we recall below:

Given $q$ a prime and $K(\bullet)$ the trace function of a geometrically irreducible middle-extension $\ell$-adic sheaf $\mcF$ on $\Aa^1_\Fq$ weight $\leq 0$ and complexity $C(\mcF)$; We will assume that the sheaf $\mcF$ is Fourier and {\em good} that is

-- The local monodromy of $\mcF$ at infinity has no indecomposable summand with slope equal to $1$.

Given $\alpha,\beta, \alpha',\beta'\in\Fqt$, we set
\begin{equation}\label{Zcompute}
Z(v):=Z_{\alpha,\beta}(v):=\frac{1}{\sqrt{q}}\sum_{x\in\Fqt}K(xv)e_q({\alpha xv})\Kl_2(\beta x;{q}),	
\end{equation}
and define 
$$Z'(v):=Z_{\alpha',\beta'}(v).$$

Let $T_\mcF(\Fq)$ be the subgroup of $\Fqt$ defined by
$$ T_\mcF(\Fq)=\{\lambda\in\Fqt,\ [\times\lambda]^*\mcF\hbox{ is geometrically isomorphic to }\mcF\}
$$
and let $\mathrm{Aff}_\mcF(\Fq)\supset T_\mcF(\Fq)$ be the subgroup of affine linear transformations of $\Fq$ defined by
$$ \mathrm{Aff}_\mcF(\Fq)=\{\gamma:y\mapsto ay+b,\ (a,b)\in\Fqt\times\Fq\ [\gamma]^*\mcF\hbox{ is geometrically isomorphic to }\mcF\}.
$$

\begin{proposition}\label{sqrootcancel2} 
 Assuming that the sheaf $\mcF$ is good and that $q$ is sufficiently large depending on $C(\mcF)$. For any $\alpha,\beta,\alpha',\beta',\delta\in\Fqt$, we have
\begin{equation}
	\label{ZZmomentdelta}
	\sum_{v}Z(v)\ov{Z'(v-\delta)}=O(q^{1/2}).
\end{equation}
If $\delta=0$ the above bound holds unless one of the following holds
\begin{itemize}
	\item $\alpha/\alpha'=\beta/\beta'$ and
	$$\gamma:=\alpha/\alpha'=\beta/\beta'\in T_\mcF(\Fq),$$
	\item $\alpha/\alpha'\not=\beta/\beta'$, $\beta/\beta'\not=1$ and
	$$\gamma: y\mapsto \frac{\beta}{\beta'}y+\alpha'(\frac{\alpha}{\alpha'}-\frac{\beta}{\beta'})\in \mathrm{Aff}_\mcF(\Fq).$$
\end{itemize}
In these two cases we have
\begin{equation}
	\label{ZZmoment}
	\sum_{v}Z(v)\ov{Z'(v)}=c_\mcF(\gamma)q+O(q^{1/2})
\end{equation}
for $c_\mcF(\gamma)$ some complex number of modulus $1$. In these estimates, the implicit constants depend only on $C(\mcF)$.
\end{proposition}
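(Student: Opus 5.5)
We follow the strategy of \cite{LMS}*{Prop.\,4.5}: realize $v\mapsto Z(v)$ as the trace function of an $\ell$-adic sheaf $\mathcal{G}_{\alpha,\beta}$ on $\mathbf{G}_m$ (or $\Aa^1$) built by sheaf-theoretic operations from $\mcF$, and reduce \eqref{ZZmomentdelta} and \eqref{ZZmoment} to the Riemann Hypothesis (Deligne) via the Grothendieck--Lefschetz trace formula.

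\textbf{Step 1: $Z$ as a Fourier transform.} Substitute $y=xv$ in \eqref{Zcompute}:
\[
Z(v)=\frac{1}{\sqrt q}\sum_{y\in\Fqt}K(y)e_q(\alpha y)\Kl_2(\beta y\ov v;q).
\]
So $Z$ is a multiplicative convolution of $L(y)=K(y)e_q(\alpha y)$ with $\Kl_2(\beta\,\cdot)$. Opening $\Kl_2$ shows this equals, up to normalisation, a composition of Fourier transforms. Concretely, $Z(v)$ is the trace function of $\mathcal{G}=\FT_\psi\big([\times\beta]^*\mathrm{inv}^*\FT_\psi(\mcF\otimes\mcL_{\psi(\alpha\cdot)})\big)$ up to a twist, which is the same kind of formula as \cite{LMS}*{(4.21)--(4.22)}.

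\textbf{Step 2: irreducibility and bounded complexity.} The sheaf $\mcF\otimes\mcL_{\psi(\alpha y)}$ is Fourier because $\mcF$ is good. Indeed, since no slope at $\infty$ equals $1$, twisting by $\mcL_{\psi(\alpha y)}$ cannot produce an Artin--Schreier constituent. Hence its Fourier transform is geometrically irreducible, of weight $0$ and of bounded complexity (by the Fourier estimates in \cite{FKM1}). Inverting $v$, pulling back, and taking a second Fourier transform preserves these properties, again using the slope computations via Laumon's local Fourier transform. The result is that $\mathcal{G}_{\alpha,\beta}$ is a geometrically irreducible middle extension, pure of weight $0$, with complexity $\ll_{C(\mcF)}1$. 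Moreover $\mathcal{G}$ has no Artin--Schreier component, which will matter for the shifts.

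\textbf{Step 3: the correlation sum.} By the trace formula and Deligne,
\[
\sum_v Z(v)\ov{Z'(v-\delta)}=O(q^{1/2})
\]
unless $\mathcal{G}_{\alpha,\beta}$ is geometrically isomorphic to $[+\delta]^*\mathcal{G}_{\alpha',\beta'}$. If they are isomorphic, the sum is $c\,q+O(q^{1/2})$ with $|c|=1$ by irreducibility.

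For $\delta\neq0$ we must exclude this isomorphism. An isomorphism $\mathcal{G}\simeq[+\delta]^*\mathcal{G}'$ translates, after undoing one Fourier transform, into
\[
[\times\beta]^*\mathrm{inv}^*\FT(\mcF_\alpha)\simeq \mcL_{\psi(\delta\cdot)}\otimes[\times\beta']^*\mathrm{inv}^*\FT(\mcF_{\alpha'}).
\]
Here the left side is tame or has small slopes at $0$ (respectively $\infty$) because of the goodness assumption, while the right side is wildly ramified with slope $1$ at $\infty$. Comparing the local monodromy at the point $\infty$ gives a contradiction. This slope comparison is the key place where goodness enters.

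For $\delta=0$ the isomorphism $\mathcal{G}\simeq\mathcal{G}'$ is equivalent, by Fourier inversion, to $[\times\beta/\beta']^*(\mcF\otimes\mcL_{\psi(\alpha y)})\simeq\mcF\otimes\mcL_{\psi(\alpha' y)}$, i.e. $[\times\lambda]^*\mcF\simeq\mcF\otimes\mcL_{\psi((\alpha'-\lambda\alpha)y)}$ with $\lambda=\beta/\beta'$. Taking a further Fourier transform converts the Artin--Schreier twist into a translation. This yields exactly the two listed cases: $\lambda\in T_\mcF(\Fq)$ when $\alpha/\alpha'=\beta/\beta'$, and otherwise an affine $\gamma\in\mathrm{Aff}_\mcF(\Fq)$ with the stated translation part.

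\textbf{Main obstacle.} The hard step is the local monodromy analysis: showing that the iterated Fourier transforms preserve irreducibility and have the slopes claimed. This is needed to rule out $\mathcal{G}\simeq[+\delta]^*\mathcal{G}'$ for $\delta\ne0$, and to make the $\delta=0$ dichotomy precise, including the distinction between $T_\mcF$ and $\mathrm{Aff}_\mcF$. We would follow \cite{LMS}*{\S4} closely, since their sums are more complicated than ours.
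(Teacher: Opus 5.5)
There is a genuine gap, and it sits in the case $\delta\neq0$, which is the heart of the proposition. It comes from Step 1, which is off by an inversion. The variable $v$ enters $Z(v)=q^{-1/2}\sum_y K(y)e_q(\alpha y)\Kl_2(\beta y\ov v;q)$ only through $\Kl_2(\beta y/v)$. Opening the Kloosterman sum therefore gives $Z(v)=Y(\beta/v)$, where $Y$ is the trace function of $\FT(\mathrm{inv}^*\FT(\mcF_\alpha))$. For your sheaf one checks that, up to sign conventions, $Z(v)=\mathcal{G}(\beta^2/v)$. So the sheaf of $v\mapsto Z(v)$ is $\mcZ=[v\mapsto\beta^2/v]^*\mathcal{G}$, not $\mathcal{G}$.

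Consequently, the isomorphism to exclude is $\mcZ\simeq[v\mapsto v-\delta]^*\mcZ'$. On your Fourier side this reads $\mathcal{G}\simeq[s\mapsto\beta'^2s/(\beta^2-\delta s)]^*\mathcal{G}'$. For $\delta\neq0$ this is a M\"obius transformation that does not fix $\infty$, not a translation, and Fourier inversion does not turn it into an Artin--Schreier twist. Your slope comparison at $\infty$ thus rules out $\mathcal{G}\simeq[+\delta]^*\mathcal{G}'$, which has no bearing on $\sum_vZ(v)\ov{Z'(v-\delta)}$.

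For $\delta=0$ the inversion only changes the scaling, to $\mathcal{G}\simeq[\times(\beta'/\beta)^2]^*\mathcal{G}'$. It still corrupts your output there. You obtain $[\times\lambda]^*\mcF\simeq\mcF\otimes\mcL_{\psi((\alpha'-\lambda\alpha)y)}$ instead of the correct $[\times\lambda]^*\mcF\simeq\mcF\otimes\mcL_{\psi((\alpha-\lambda\alpha')y)}$, with $\lambda=\beta/\beta'$. Your version would put the $T_\mcF$ case at $\alpha'/\alpha=\beta/\beta'$, contrary to the statement. The paper avoids this by the elementary identity $\frac1q\sum_u\Kl_2(\beta xu;q)\ov{\Kl_2(\beta'x'u;q)}=\delta_{\beta x=\beta'x'}-\frac1q$.

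What is missing for $\delta\neq0$ is the paper's argument by location of singularities. It has two parts:
\begin{enumerate}
\item[(i)] $\mcZ$ is lisse on $\Gm$;
\item[(ii)] $\mcZ$ has a non-trivial singularity at $0$.
\end{enumerate}
Together these force $[+\delta]^*\mcZ$ to be singular at a point of $\Gm$ where $\mcZ'$ is lisse, so the two cannot be isomorphic.

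Goodness enters in (i). The paper applies Deligne's semicontinuity theorem to $\mcF_\alpha\otimes[x\mapsto\beta x/v]^*\KL_2$. Since all $\infty$-slopes of $\mcF_\alpha$ are $\geq1$ while $\KL_2$ has slope $1/2$, the Swan conductor at $\infty$ does not depend on $v$. Your observation that $[\times\beta]^*\mathrm{inv}^*\FT(\mcF_\alpha)$ is tame at $\infty$ is essentially the Fourier-side form of this: via Laumon's local Fourier transform it makes $\mathcal{G}$, hence $\mcZ$, lisse on $\Gm$. That part of your analysis can be salvaged, but it proves lisseness, not a non-isomorphism of twists.

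Step (ii) has no counterpart in your proposal and is not automatic; it is equivalent to $\mathcal{G}$ being ramified at $\infty$. The paper obtains it by adapting the Euler-characteristic case analysis of \cite{LMS}*{Lemma 8.7} to $\mcF_\alpha$.
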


\begin{remark}This proposition is analogous to  \cite{LMS}*{Prop 4.5}
    where we studied the correlations of  sums similar to $Z(v),Z'(v)$ but with $e_q({\alpha xv})$ replaced by the Kloosterman sum $\Kl_2(\alpha xv;q)$.
\end{remark}

\subsection{The case $\delta=0$}
We observe that for $v\in \Fqt$ one has
$$Z(v)=\frac{1}{\sqrt{q}}\sum_{x\in\Fqt}K(x)e_q({\alpha x})\Kl_2(\beta ux;{q})=\frac{1}{\sqrt{q}}\sum_{y\in\Fq}\what K(y+\alpha)e_q(\beta /y v u)^{-1}$$
where $u=-v^{-1}$.
In particular
$$\sum_{v\in\Fqt}Z(v)\ov{Z'(v)}=\sumsum_{x,x'\in \Fqt}K(x)\ov{K(x')}e_q(\alpha x-\alpha' x')\frac{1}q\sum_{u\in\Fqt}\Kl_2(\beta xu;{q})\ov{\Kl_2(\beta' x'u;{q})}$$
The inner sum equals (since $\Kl_2(0;q)=0$)
\begin{eqnarray*}
    \frac{1}q\sum_{u\in\Fqt}\Kl_2(\beta xu;{q})\ov{\Kl_2(\beta' x'u;{q})}&=&\frac{1}q\sum_{u\in\Fq}\Kl_2(\beta ux;{q})\ov{\Kl_2(\beta' x'u;{q})}
    \\&=&\frac{1}q\sum_{z,z'\in\Fqt}e_q(\beta x/z-\beta' x'/z')\frac{1}{q}\sum_{u\in\Fq}e_q((z-z')u)\\
    &=&\frac{1}q\sum_{z\in\Fqt}e_q((\beta x-\beta' x')/z)\\
    &=&\delta_{\beta x=\beta'x'}-\frac{1}q
\end{eqnarray*}
Hence
\begin{align*}
	\sum_{v\in\Fqt}Z(v)\ov{Z'(v)}&=\sum_{x\in \Fqt}K(x)\ov{K(\frac{\beta}{\beta'}x)}e_q({(\alpha -\frac{\alpha'\beta}{\beta'})x })+(\what K(\alpha)-K(0))\ov( \what K(\alpha')-K(0)) \\
	&=\sum_{x\in \Fqt}K(x)\ov{K(\frac{\beta}{\beta'}x)}e_q({(\alpha -\frac{\alpha'\beta}{\beta'})x })+O(1)
\end{align*}
since $\mcF$ is Fourier and therefore $\|\what K\|_\infty\ll_{C(\mcF)}1$.

The $x$-sum in the last expression above is $O(q^{1/2})$ unless one has a geometric isomorphism 
$$\mcF\otimes\mcL_{e_q(\alpha'\eta x)}\simeq [\times \beta/\beta']\mcF$$
for $\eta=\alpha/\alpha' -\beta/\beta'$.

Suppose $\eta=0$ then, since $\mcF$ is geometrically irreducible, we then must have $\alpha/\alpha'=\beta/\beta'\in T_\mcL(\Fq)$ and \eqref{ZZmoment} holds.

If $\eta\not=0$, taking the Fourier transforms on both sides we have
$$[+\alpha'\eta]\what\mcF\simeq [\times \beta'/\beta]\what\mcF\Longleftrightarrow  [\gamma]^* \mcF\simeq \mcF$$
where $\gamma$ is the affine transformation 
$$\gamma: y\mapsto \frac{\beta}{\beta'}y+\alpha'(\frac{\alpha}{\alpha'}-\frac{\beta}{\beta'}).$$
Notice that if $\beta=\beta'$, then $\gamma$ would then be a non-trivial translation which would imply (for $C(\mcF)\ll_q 1$) that $\what\mcF$ is an Artin--Schreier sheaf which is excluded.

\subsection{The case $\delta\not=0$}

Let $\mcL_\alpha$ be the Artin--Schreier sheaf associated with the additive character $x\mapsto e_q(\alpha x)$, then
$$K_\alpha: x\mapsto K(x)e_q(\alpha x)$$ is the trace function of the twist $\mcF_\alpha:=\mcF\otimes\mcL_\alpha.$

Next we observe that $Z(v)$ is the multiplicative convolution of $x\mapsto K_\alpha(x)$ with $x\mapsto \Kl_2(\beta x^{-1})$ so that
$Z(v)$ is the trace function of the middle  convolution sheaf
\begin{equation}\label{mcZdef}
\mcZ=\mcZ_{\alpha,\beta}:=\mcF_\alpha  \star [y\ra\beta/y]^*\HYPK_2=\mcF_\alpha\star\mcK	
\end{equation}
say. Likewise $Z'$ is the trace function of
$$
\mcZ'=\mcZ_{\alpha',\beta'}=\mcF_{\alpha'} \star [y\ra\beta'/y]^*\HYPK_2=\mcF_{\alpha'}\star\mcK'.
$$

Notice that  $\mcF_{\alpha}$ and $\mcF_{\alpha'}$ are geometrically irreducible and non trivial (being Artin--Schreier twists of the geometrically irreducible sheaf $\mcF$, which is Fourier). 

This implies that $\mcZ$ and $\mcZ'$ are also geometrically irreducible. This is because $\mcZ$ and $\mcZ'$ are obtained from $\mcF_{\alpha}$ and $\mcF_{\alpha'}$ by two applications of a geometric Fourier transform which preserves geometric irreducibility. 

It follows that \eqref{ZZmomentdelta} holds unless we have the geometric isomorphism
$$\mcZ'\simeq [+\delta]^*\mcZ.$$
To show that this does not occurs, we follow the same path as in \cite{LMS}*{\S 8} excepted that the present argument is much simpler. 

\begin{lemma}\label{he-lisse} 
If $\mcF$ has no slope at $\infty$ equal to $1$, then $\mcZ$ is lisse away from $\{0, \infty\}$. 
\end{lemma}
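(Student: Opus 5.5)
The plan is to realize $\mcZ=\mcF_\alpha\star\mcK$ as an iterated Fourier transform and read off lisseness from Laumon's local Fourier transform theory (stationary phase). Write $[y\mapsto\beta/y]^*\HYPK_2$ as the multiplicative convolution input. For $v\neq 0$ the trace function is
$$Z(v)=\frac{1}{\sqrt q}\sum_{y}\what K(y+\alpha)\,e_q\Bigl(\frac{\beta}{yvu}\Bigr)^{-1}\quad(u=-v^{-1}),$$
as in the computation of the case $\delta=0$. In sheaf terms, $\mcZ$ restricted to $\Gm$ is obtained as follows. Form $\mcG:=[+\alpha]^*\what\mcF$, the Fourier transform of $\mcF_\alpha$, which is a geometrically irreducible middle extension since $\mcF$ is Fourier. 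Pull $\mcG$ back by the inversion $\iota:y\mapsto 1/y$, and then take the Fourier transform once more, with the parameter scaled by $\beta$. Thus, up to a geometrically constant twist and a scaling of the variable,
$$\mcZ\simeq \FT_\psi(\iota^*\mcG)$$
on $\Gm$. The first step is to make this identification precise at the level of sheaves, using the fact that the Fourier transform of a middle extension is again a middle extension.

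Next, I recall that the Fourier transform of a middle extension $\mathcal{H}$ on $\Aa^1$ is lisse on $\Aa^1$ exactly when the local monodromy of $\mathcal{H}$ at $\infty$ contributes nothing to finite singularities. By Laumon's local Fourier transforms, the singularities of $\FT_\psi(\mathcal{H})$ at a finite point $a$ come only from the $\mathrm{FT}^{(\infty,a)}$ piece. That piece is fed by the part of the $\infty$-monodromy of $\mathcal{H}$ of the form $\mcL_{\psi(a x)}\otimes(\text{slope}<1)$, i.e. the summands with slope exactly $1$. The points $0$ and $\infty$ are excluded from the claim anyway. So it suffices to show that $\iota^*\mcG$ has no summand of slope $1$ at $\infty$, except possibly one whose Artin--Schreier part is $\mcL_\psi(0\cdot x)$, which produces a singularity only at $0$. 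Equivalently, I need to show that $\mcG$ at $0$ has no summand with slope $1$ and nontrivial Artin--Schreier twist. Any non-tame part at $0$ contributes only to the slope $>1$ or $<1$ regime after inversion, so it does not matter here.

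The third step is to compute the local monodromy of $\mcG=[+\alpha]^*\what\mcF$ at $0$, i.e. of $\what\mcF$ at the finite point $\alpha$. By Laumon again, the singularity of $\what\mcF$ at a finite point $\alpha$ comes from $\mathrm{FT}^{(\infty,\alpha)}$ applied to the part of the $\infty$-monodromy of $\mcF$ of the form $\mcL_{\psi(\alpha x)}\otimes(\text{slope}<1)$, i.e. to slope-$1$ summands of $\mcF$ at $\infty$. There are also contributions from the finite singularities of $\mcF$ via $\mathrm{FT}^{(s,\infty)}$, but those live at $\infty$, not at $\alpha$. Since $\mcF$ is good, it has no slope-$1$ summand at $\infty$, so $\what\mcF$ is lisse on all of $\Aa^1$. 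Hence $\mcG$ is lisse on $\Aa^1$, in particular at $0$. After inversion, $\iota^*\mcG$ is then lisse on $\Gm$ and is unramified near $\infty$ in the sense that its inertia at $\infty$ acts unipotently/tamely, hence with slope $0$. Feeding this into the previous paragraph, $\FT_\psi(\iota^*\mcG)$ can only be singular at $0$ (from tame parts at $\infty$) and at $\infty$. This gives the statement of the lemma.

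The main obstacle will be the bookkeeping in the first step: correctly identifying the middle convolution with the Kloosterman sheaf as ``Fourier, invert, Fourier'' at the level of sheaves, including the extension by zero versus middle extension issues at $0$. I would handle this as in \cite{LMS}*{\S 8}, by invoking Katz's description of $\HYPK_2$ as a Fourier transform and the compatibility $\FT(\mathcal{A}\star_! \mathcal{B})$ with tensor products.
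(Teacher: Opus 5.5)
\textbf{Step 3 is false as written, and the hypothesis $\alpha\neq 0$ is never used.} Your route differs from the paper's: you realise $\mcZ$ as ``Fourier, invert, Fourier'' and read off singularities with Laumon's local Fourier transforms. It can be made to work, but Step 3 carries the whole argument, and goodness does \emph{not} make $\what\mcF$ lisse on $\Aa^1$.

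\begin{itemize}
\item For $a\neq 0$, a singularity of $\what\mcF$ at $a$ needs a summand $\mcL_{\psi(-ax)}\otimes(\text{slopes}<1)$ of $\mcF$ at $\infty$. That summand has slope exactly $1$, so goodness excludes it.
\item At $a=0$, the relevant input is the whole slope${}<1$ part of $\mcF$ at $\infty$, which is typically nonzero. With $j:\Gm\hookrightarrow\Aa^1$, a Kummer sheaf gives $\what\mcF\simeq j_!\mcL_{\ov{\chi}}$ up to a Gauss sum, and the transform of $\KL_4$ has slope $1/3$ at $0$.
\end{itemize}
So $\what\mcF$ is only lisse on $\Gm$, $\mathcal{G}$ is only lisse away from $-\alpha$, and $\iota^*\mathcal{G}$ is singular at $-1/\alpha$.

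What you actually need is lisseness of $\what\mcF$ at the single point $\alpha$, and that requires $\alpha\neq0$, i.e.\ $\alpha\in\Fqt$. The hypothesis is essential. Take $\alpha=0$ and $\mcF=\KL_2$, which is good (slope $1/2$ at $\infty$).
\begin{itemize}
\item Then $\what{\KL_2}\simeq j_*\mcL_{\psi(-1/t)}$ up to a geometrically constant term.
\item So $\iota^*\what{\KL_2}$ has the slope-$1$ piece $\mcL_{\psi(-t)}$ at $\infty$, and its transform is singular at $1$.
\item Correspondingly, $\sum_{y\in\Fqt}\Kl_2(y)\Kl_2(\beta y/v)$ has a spike of size $q$ at $v=\beta$, so the lemma fails.
\end{itemize}
Your Step 3 would ``prove'' lisseness in this case.

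\textbf{The repair.} For $\alpha\neq0$, $\what\mcF$ is lisse at $\alpha$. Hence $\iota^*\mathcal{G}$ has trivial monodromy at $\infty$. The finite singularities of $j_!\iota^*\mathcal{G}$ (at $0$ and $-1/\alpha$) cannot create singularities of its transform on $\Gm$, so the argument goes through.

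Two smaller points:
\begin{itemize}
\item The trace computation actually gives $\mcZ\simeq[v\mapsto\beta/v]^*\FT_\psi(j_!\iota^*\mathcal{G})$ modulo geometrically constant terms. This is an inversion, not merely a scaling, but it is harmless on $\Gm$.
\item Two sentences in your second paragraph are wrong: the one about slope-$1$ summands with trivial Artin--Schreier part, and the claim that wild parts at $0$ leave the slope-$1$ regime after inversion (inversion preserves slopes). Both become moot once $\mathcal{G}$ is known to be lisse at $0$.
\end{itemize}

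\textbf{Comparison with the paper.} The paper bypasses the sheaf-theoretic identification. It writes the $!$-convolution as $\pi_!(\mcF_\alpha\otimes[x\mapsto\beta x/v]^*\KL_2)$ and applies Deligne's semicontinuity directly. The key observation is that, since $\alpha\neq0$, for each indecomposable summand $V$ of $\mcF$ at $\infty$ (of slope $s\neq1$), $V\otimes\mcL_\alpha$ has slope $\max(s,1)\geq1>1/2$. Hence the Swan conductor at $\infty$ is independent of $v$.

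Your corrected route uses that same Swan-constancy principle twice, since lisseness of $\what\mcF$ at $\alpha$ is itself proved that way. It also carries the derived-category bookkeeping you flagged. In exchange, it gives access via local Fourier transforms to the local monodromy of $\mcZ$ at $0$ and $\infty$, which is the kind of information Lemma \ref{he-nontrivial} needs.
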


\begin{proof} The middle convolution $\mcF_\alpha \star \mcK $ is equal to the compactly supported convolution $\mcF_\alpha \star_{!} \mcK$ up to a lisse sheaf, so it suffices to prove this for the compactly supported convolution.

We can rewrite the  compactly supported convolution   as
$$\pi_{!}(\mcF_\alpha\otimes [\times\beta x/v]^*\KL_{2})$$
where
$$\pi:\map{\Gm\times\Gm}{\Gm}{(x,v)}{v}$$

To study where $\mcZ$ is lisse, we apply Deligne's semicontinuity theorem and examine the variation with $v$ of the Euler characteristic of the sheaf (in the $x$-variable) $\mcF_\alpha \otimes [x\ra\beta x/v]^*\HYPK_2$. Since $\HYPK_2$ is wildly ramified only at $\infty$ (with a single slope $1/2$) it suffice to look at the variation with $v$ of the Swan conductor
\begin{equation}
    \label{swanv}
    v\mapsto \swan_\infty(\mcF_\alpha\otimes [x\mapsto \beta x/v]\KL_2).
\end{equation}

We recall that for $\alpha\not=0$, $\mcL_\alpha$ has a (unique) slope at $\infty$, equal to $1$ and for  for any $v$ the sheaf $[x\mapsto \beta x/v]^*\KL_2$ has a single slope at $\infty$,  equal to  $1/2$, with multiplicity $2$.

Let $V$ be an indecomposable summand of the local monodromy representation of $\mcF$ at $\infty$ and $s\not=1$ be its slope; the slope of $(V  \otimes \mcL_{\alpha})$ equals $\max(s,1)\geq 1$; in particular it does not depend on $\alpha$.

It follows that the sheaf in the $x$-variable, $(V  \otimes \mcL_{\alpha}) \otimes [x\ra\beta x/v]^*\HYPK_2$ has for unique slope $\max(1/2,\max(s,1))=\max(s,1)\geq 1$ for every $v$ , so the contribution of this representation to the Swan conductor \eqref{swanv} is constant with $v$.

By Deligne's semicontinuity theorem $\mcF_\alpha\otimes\mcK$ is lisse on $\Gm$. \end{proof} 

\begin{lemma}\label{he-nontrivial}
If $\mcF_\infty$ has no irreducible summand with slope equal to $1$, then $\mcZ=\mcF_\alpha \star \mcK$ has a nontrivial singularity at zero. 
\end{lemma}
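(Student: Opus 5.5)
The plan is to compute the local monodromy of $\mcZ$ at $0$ through the local convolution / local Fourier transform formalism of Laumon and Katz. I will not attack the multiplicative convolution directly. Recall that $\mcZ$ is obtained from $\mcF_\alpha$ by two geometric Fourier transforms: $Z(v)=q^{-1/2}\sum_y \what K(y+\alpha)e_q(-\beta/(yv))$ up to normalisation. Up to harmless pullbacks by $y\mapsto 1/y$ and scalings, $\mcZ$ is therefore the inverse image under $v\mapsto 1/v$ (or a scaled variant) of the Fourier transform of $\mcG:=[y\mapsto 1/y]^*[+\alpha]^*\what\mcF$. The behaviour of $\mcZ$ at $v=0$ is then the behaviour of $\FT(\mcG)$ at $\infty$.

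By Laumon's stationary phase, the local monodromy of $\FT(\mcG)$ at $\infty$ decomposes into three pieces:
\begin{itemize}
\item $\FT^{(0,\infty)}$ applied to the local monodromy of $\mcG$ at $0$;
\item $\FT^{(s,\infty)}$ applied, for each finite singularity $s\neq 0$ of $\mcG$, to its local monodromy at $s$ (twisted by $\mcL_{\psi(sx)}$);
\item $\FT^{(\infty,\infty)}$ applied to the part of $\mcG$ at $\infty$ with slopes $>1$.
\end{itemize}
It suffices to exhibit one of these pieces that is nonzero and not tame unipotent-trivial. A trivial local monodromy at $\infty$ for $\FT(\mcG)$ would force all of them to vanish.

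The key step is to identify the local monodromy of $\mcG$ at $0$. This equals the local monodromy of $[+\alpha]^*\what\mcF$ at $\infty$, i.e.\ that of $\what\mcF$ at $\infty$, which Laumon expresses in terms of $\mcF$. Here goodness enters: the slope-$1$ part of $\mcF_\infty$ is exactly what $\FT^{(\infty,\infty)}$ would send to finite singularities of $\what\mcF$. Since $\mcF$ has no slope $1$ at $\infty$, $\what\mcF_\infty$ is $\FT^{(0,\infty)}(\mcF_0)\oplus\FT^{(\infty,\infty)}(\mcF_\infty^{>1})\oplus\FT^{(\infty,\infty)}$ of the slope $<1$ part (which stays slope $<1$), together with the contributions of finite singularities of $\mcF$, which all have slope exactly $1$. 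Because $\mcF$ is Fourier, $\what\mcF$ is a nontrivial irreducible sheaf of positive rank, so $\mcG$ has nonzero rank near $0$.

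With this in hand, the plan splits into two cases.
\begin{itemize}
\item If $\mcG_0$ is not entirely tame-unipotent-trivial, then $\FT^{(0,\infty)}(\mcG_0)$ is a nonzero representation at $\infty$ with slopes $<1$, possibly tame. I must check that it is not trivial. $\FT^{(0,\infty)}$ of a nontrivial tame character is a nontrivial tame character, and wild parts go to wild parts, so $\mcZ$ is singular at $0$.
\item If $\mcG_0$ were entirely trivial, I would instead use a finite singularity $s$ of $\mcG$ or its wild part at $\infty$. These correspond respectively to singularities of $\what\mcF$ and to the slope-$<1$/tame part of $\what\mcF$ at $0$. Irreducibility and non-Artin--Schreier-ness of $\what\mcF$ guarantee at least one nontrivial such piece, whose $\FT^{(s,\infty)}$ or $\FT^{(\infty,\infty)}$ transform is wild (slope $\geq 1$ or $>1$) at $\infty$, hence nontrivial.
\end{itemize}

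The main obstacle is the bookkeeping showing that the relevant local Fourier pieces do not all collapse to trivial representations. In particular, I must verify that the slope-$1$ exclusion (goodness) prevents $\mcF_\alpha$'s slope-$1$ twist from cancelling. As a fallback, I would instead compare Euler characteristics: compute $\operatorname{rank}\mcZ$ generically via the Swan computation of Lemma \ref{he-lisse}, and compare it with the dimension of inertia invariants at $0$ via the Grothendieck–Ogg–Shafarevich formula for $\mcF_\alpha\otimes[x\mapsto\beta x/v]^*\KL_2$ as $v\to 0$. A drop in rank, or the appearance of wild slopes, would then witness a nontrivial singularity.
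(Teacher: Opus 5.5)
Your Fourier-side strategy is viable. Carried out correctly, it would be more self-contained than the paper, which only notes that all slopes of $\mcF_\alpha$ at $\infty$ are $\geq 1$ and then appeals to the case analysis of \cite{LMS}*{Lemma 8.7}. But the bookkeeping you defer is the entire proof, and what you do write is wrong at the decisive points.

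First, your description of $\what\mcF_\infty$ is incorrect. $\FT^{(\infty,\infty)}$ acts only on the slope $>1$ part of $\mcF_\infty$. The slope $<1$ part goes via $\FT^{(\infty,0)}$ to the vanishing cycles of $\what\mcF$ at $0$, and a slope-$1$ part of type $\mcL_{\psi(tx)}$ goes to $\what\mcF$ at $t$. Goodness does not change $\what\mcF_\infty$ at all; what it does is make $\what\mcF$ lisse on $\Gm$.

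Second, your dichotomy on $\mathcal{G}$ at $0$ fails. Stationary phase applies $\FT^{(0,\infty)}$ to the vanishing cycles $\mathcal{G}_{\bar\eta_0}/\mathcal{G}_0$, and a unipotent Jordan block of size $2$ contributes a \emph{trivial} line. So ``trivial monodromy forces every piece to vanish'' is also false. This happens for the good sheaf $\mcF=\KL_3$: $\what\mcF$ is a pull-back of $\KL_2$ by $t\mapsto c/t$, so $\mathcal{G}$ has monodromy $U(2)$ at $0$ and its $0$-piece is trivial. Your first bullet's claim that this piece is nontrivial therefore fails, and your second bullet, which assumes $\mathcal{G}$ trivial at $0$, does not apply. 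Also, in the second bullet, $\mathcal{G}$ at $\infty$ reflects $\what\mcF$ at $\alpha$, not at $0$.

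More fundamentally, goodness is never used in an essential way. Your claim that irreducibility and non-Artin--Schreier-ness of $\what\mcF$ produce a nontrivial piece is false without it. Take $\mcF=\KL_3\otimes\mcL_{\psi(-\alpha x)}$, which has slope $1$ at $\infty$. Then $\mcF_\alpha=\KL_3$ and $\mathcal{G}$ is a dilate of $\KL_2$. Its Fourier transform is a dilate of $\mcL_{\psi(1/x)}$, unramified at $\infty$, so $\mcZ\simeq\mcL_{\psi(cv)}$ is lisse at $0$.

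The missing idea is to split according to $\mcF_\infty$ rather than $\mathcal{G}_0$. By goodness, $\mcF_\infty=\mcF_\infty^{<1}\oplus\mcF_\infty^{>1}\neq 0$.
\begin{itemize}
\item If $\mcF_\infty^{<1}\neq 0$, then $\what\mcF$ has nonzero vanishing cycles $\FT^{(\infty,0)}(\mcF_\infty^{<1})$ at $0$. Since $\alpha\neq 0$, $\mathcal{G}$ is singular at $s_0=-1/\alpha\in\Gm$, and $\FT(\mathcal{G})_\infty$ contains the nonzero piece $\FT^{(s_0,\infty)}(\cdots)$, all of slope exactly $1$.
\item If $\mcF_\infty^{>1}\neq 0$, then $\what\mcF_\infty\supset\FT^{(\infty,\infty)}(\mcF_\infty^{>1})$ has slopes $>1$. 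So $\mathcal{G}$ is wildly ramified at $0$, its wild part lies in the vanishing cycles, and $\FT^{(0,\infty)}$ of it has slopes in $(0,1)$.
\end{itemize}
In both cases $\mcZ$ is wildly ramified at $0$. This is the Fourier-side form of the paper's input that all slopes of $\mcF_\alpha$ at $\infty$ are $\geq 1$.
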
 

\begin{proof} Following the same arguments as in the beginning of the proof \cite{LMS}*{Lemma 8.7} (the fact that the Euler characteristic of $\chi$ is non-positive) with the sheaf  $\mcF_\alpha=\mcF\otimes\mcL_\alpha$ here, replacing the sheaf denoted $\mcK=\mcF\otimes [\times\alpha]^*\KL_2$ and noting that the sheaf $\mcK=[x\mapsto \beta x^{-1}]^*\KL_2$ was denoted $\mcL$ in \cite{LMS}, one shows that at least of the following situations happens:

\begin{enumerate}

\item $\mcF$ is singular at some point on $\Gm$.

\item The local monodromy representation of $\mcF$ at zero is not unipotent.

\item The local monodromy representation of $\mcF$ at zero has a Jordan block of size $\geq 4$.

\item\label{fourthcase} The local monodromy representation of $\mcF$ at $\infty$ has a summand with slope $\geq 1/3$.  \end{enumerate}

Since $\mcF_\infty$ has no summand with slope $1$, the following holds for $\mcF_\alpha=\mcF \otimes \mcL_{\alpha}$. 

\begin{enumerate}

\item $\mcF_\alpha$ is singular at some point on $\Gm$.

\item The local monodromy representation of $\mcF_\alpha$ at zero is not unipotent.

\item The local monodromy representation of $\mcF_\alpha$ at zero has a Jordan block of size $\geq 4$.

\item The local monodromy representation of $\mcF_\alpha$ at $\infty$ has all its slopes $\geq 1$.
\end{enumerate}

The first three situations are the same as in \cite{LMS}*{Lemma 8.7} while the fourth is stronger in the present paper; hence the arguments of \cite{LMS}*{Lemma 8.7} apply and we obtain that $\mcF_\alpha \star \mcK $ has non-trivial local monodromy at zero. 
\end{proof}
We can now conclude the proof of \eqref{ZZmomentdelta}:
we have to show that $\mcZ'$ is not geometrically isomorphic to $[+\delta]\mcZ$ for any $\delta\not=0$ but if it were the case $\mcZ'$ would have a non-trivial singularity at $+\delta$ (inherited from the non-trivial singularity of $\mcZ$ at $0$ by Lemma \ref{he-nontrivial}) and this is impossible by Lemma \ref{he-lisse}.

\end{document}